\numberwithin{equation}{section}
\newtheorem{theorem}{Theorem}[section]
\newtheorem{lemma}[theorem]{Lemma}
\begin{document}

\title[Malmquist type difference equations]{On meromorphic solutions of Malmquist type difference equations}

\author{Yueyang Zhang}
\address{School of Mathematics and Physics, University of Science and Technology Beijing, No.~30 Xueyuan Road, Haidian, Beijing, 100083, P.R. China}
\email{zhangyueyang@ustb.edu.cn}
\thanks{The first author is supported by a Project funded by China Postdoctoral Science Foundation~{(2020M680334)} and the Fundamental Research Funds for the Central Universities~{(FRF-TP-19-055A1)}. The second author is supported by a Visiting Professorship for Senior Foreign Experts by Ministry of Science and Technology of the People's Republic of China~{(G2021105019L)}}

\author{Risto Korhonen}
\address{Department of Physics and Mathematics, University of Eastern Finland, P.O. Box 111, FI-80101 Joensuu, Finland,\newline
and\newline
\indent School of Mathematics and Physics, University of Science and Technology Beijing, No.~30 Xueyuan Road, Haidian, Beijing, 100083, P.R. China
}
\email{risto.korhonen@uef.fi}

\subjclass[2010]{Primary 39A10; Secondary 30D35 \and 39A12}

\keywords{Malmquist type difference equations; Nevanlinna theory; Meromorphic solutions; Differential equations; Continuum limit}

\date{\today}

\commby{}

\begin{abstract}
Recently, the present authors used Nevanlinna theory to provide a classification for the Malmquist type difference equations of the form $f(z+1)^n=R(z,f)$ $(\dag)$ that have  transcendental meromorphic solutions, where $R(z,f)$ is rational in both arguments. In this paper, we first complete the classification for the case $\deg_{f}(R(z,f))=n$ of~$(\dag)$ by identifying a new equation that was left out in our previous work. We will actually derive all the equations in this case based on some new observations on~$(\dag)$. Then, we study the relations between $(\dag)$ and its differential counterpart $(f')^n=R(z,f)$. We show that most autonomous equations, singled out from~$(\dag)$ with $n=2$, have a natural continuum limit to either the differential Riccati equation $f'=a+f^2$ or the differential equation $(f')^2=a(f^2-\tau_1^2)(f^2-\tau_2^2)$, where $a\not=0$ and $\tau_i$ are constants such that $\tau_1^2\not=\tau_2^2$. The latter second degree differential equation and the symmetric QRT map are derived from each other using the bilinear method and the continuum limit method.
\end{abstract}

\maketitle


\section{Introduction}\label{intro} 

The classical Malmquist theorem~\cite{malmquist1913fonctions} states that: If the first order differential equation $f'=R(z,f)$, where $R(z,f)$ is rational in both arguments, has a transcendental meromorphic solution, then this equation reduces into the Riccati equation
    \begin{equation}\label{riccati}
    f'=a_2 f^2 + a_1 f + a_0,
    \end{equation}
where $a_0$, $a_1$ and $a_2$ are rational functions. Generalizations of Malmquist's theorem for the equation
    \begin{equation}\label{first_order_di_n}
    (f')^n=R(z,f),\qquad n\in\mathbb{N},
    \end{equation}
have been given by Yosida \cite{yosida:33} and Laine \cite{laine:71}. Steinmetz \cite{steinmetz:78}, and Bank and Kaufman \cite{bank1980growth} proved that if \eqref{first_order_di_n} has rational coefficients and a transcendental meromorphic solution, then by a suitable M\"obius transformation, \eqref{first_order_di_n} can be either mapped to the Riccati equation \eqref{riccati}, or to one of the equations in the following list:
    \begin{eqnarray}
    (f')^2 &=& a(f-b)^2(f-\tau_1)(f-\tau_2),\label{MYS1}\\
    (f')^2 &=& a(f-\tau_1)(f-\tau_2)(f-\tau_3)(f-\tau_4),\label{MYS2}\\
    (f')^3 &=& a(f-\tau_1)^2(f-\tau_2)^2(f-\tau_3)^2,\label{MYS3}\\
    (f')^4 &=& a(f-\tau_1)^2(f-\tau_2)^3(f-\tau_3)^3,\label{MYS4}\\
    (f')^6 &=& a(f-\tau_1)^3(f-\tau_2)^4(f-\tau_3)^5,\label{MYS5}
    \end{eqnarray}
where $a$ and $b$ are rational functions, and $\tau_1,\ldots,\tau_4$ are distinct constants.
See \cite[Chapter~10]{Laine1993} for more information about Malmquist--Yosida--Steinmetz type theorems.

Recently, the present authors \cite{Korhonenzhang2020,zhangkorhonen2022} used Nevanlinna theory to provide a  classification for a natural difference analogue of equation \eqref{first_order_di_n}, i.e., the first-order difference equation
\begin{equation}\label{first_order_de_n}
f(z+1)^n=R(z,f),
\end{equation}
where $n\in \mathbb{N}$ and $R(z,f)$ is rational in both arguments. In particular, it is shown in \cite{Korhonenzhang2020} that if the difference equation \eqref{first_order_de_n} has a transcendental meromorphic solution $f$ of hyper-order $<1$, then either $f$ satisfies a difference linear or Riccati equation
    \begin{eqnarray}
    f(z+1)&=& a_1(z)f(z)+a_2(z),\label{lineareq}\\
    f(z+1)&=&\frac{b_1(z)f(z)+b_2(z)}{f(z)+b_3(z)}\label{driccati0},
     \end{eqnarray}
where $a_i(z)$ and $b_j(z)$ are rational functions, or, by implementing a transformation $f\rightarrow \alpha f$ or $f\rightarrow 1/(\alpha f)$ with an algebraic function $\alpha$ of degree at most~2, \eqref{first_order_de_n} reduces into one of the following equations:
   \begin{eqnarray}
   f(z+1)^2 &=& 1-f(z)^2,\label{yanagiharaeq11 co}\\
   f(z+1)^2 &=& 1-\left(\frac{\delta(z) f(z)-1}{f(z)-\delta(z)}\right)^2,\label{yanagiharaeq12 co}\\
   f(z+1)^2 &=& 1-\left(\frac{f(z)+3}{f(z)-1}\right)^{2},\label{yanagiharaeq13 co}\\
   f(z+1)^2 &=& \frac{f(z)^2-\kappa^2}{f(z)^2-1},\label{yanagiharaeq14 co}\\
   f(z+1)^3 &=& 1-f(z)^{-3},\label{yanagiharaeq15 co}
  \end{eqnarray}
where $\delta(z)\not\equiv\pm1$ is an algebraic function of degree $2$ at most and $\kappa^2\not=0,1$ is a constant. Finite-order meromorphic solutions of the autonomous forms of the equations \eqref{lineareq}--\eqref{yanagiharaeq15 co} are presented explicitly in \cite{Korhonenzhang2020}. These results provide a natural difference analogue of Steinmetz' generalization of Malmquist's theorem in the sense of Ablowitz, Halburd and Herbst \cite{AblowitzHalburdHerbst2000}, who suggested that the existence of sufficiently many finite-order meromorphic solutions of a difference equation is a good candidate for a difference analogue of the Painlev\'e property \cite{halburdrk:LMS2006}. It was shown that the finite-order condition of the proposed difference Painlev\'e property can be relaxed to hyper-order strictly less than one in \cite{halburdkt:14TAMS}, and recently to hyper-order equal to one limitedly in~\cite{Korhonenkazhzh2020,zhengR:18}. Further, by discarding the assumption that the meromorphic solution is of hyper-order $<1$ and considering transcendental meromorphic solutions of \eqref{first_order_de_n} with $\deg_f(R(z,f))=n$, it was shown in \cite{Korhonenzhang2020} that either $f$ satisfies \eqref{lineareq} or \eqref{driccati0}, or \eqref{first_order_de_n} can be transformed into one of the equations \eqref{yanagiharaeq11 co}--\eqref{yanagiharaeq15 co}, or one of the following equations:
   \begin{eqnarray}
   f(z+1)^2 &=& \eta^2(f(z)^2-1), \label{intro_eq_list2_1}\\
   f(z+1)^2 &=& 2(1-f(z)^{-2}), \label{intro_eq_list2_2}\\
   f(z+1)^2 &=& \frac{1+f(z)^2}{1-f(z)^2}, \label{intro_eq_list2_3}\\
   f(z+1)^2 &=& \theta\frac{f(z)^2-\kappa_1f(z)+1}{f(z)^2+\kappa_1f(z)+1}, \label{intro_eq_list2_4}\\
   f(z+1)^3 &=& 1-f(z)^3,\label{intro_eq_list2_5}
  \end{eqnarray}
where $\theta=\pm1$, $\eta\not=1$ is the cubic root of $1$ and $\kappa_1$ is a constant such that $\kappa_1^2(\kappa_1^2-4)=2(1-\theta)\kappa_1^2-8(1+\theta)$. Transcendental meromorphic solutions of the five equations above are elliptic functions composed with entire functions and have hyper order $\geq 1$.

This paper has two purposes. The first one is to complete the classification for the case $\deg_f(R(z,f))=n$ of \eqref{first_order_de_n}. When classifying equation \eqref{first_order_de_n} for the case $\deg_f(R(z,f))\not=n$ in \cite{zhangkorhonen2022}, we made some new observations that also apply to equation \eqref{first_order_de_n} in the case where $\deg_f(R(z,f))=n$. In the next section~\ref{An elementary derivation of the first 12 equations}, we will use these new observations to derive the ten equations \eqref{yanagiharaeq11 co}--\eqref{intro_eq_list2_5} in a straightforward manner and, at the same time, identify the following equation which was omitted in\cite{Korhonenzhang2020}:\begin{equation}\label{intro_eq_list2_6}
    f(z+1)^2 = \frac{1}{2}\frac{(1+\delta)^2}{1+\delta^2}\frac{(f-1)(f-\delta^2)}{(f-\delta)^2},
    \end{equation}
where $\delta\not=0,\pm1,\pm i$ is a constant such that
    \begin{equation}\label{Poiu}
    8\delta^5(\delta^2+1)-(\delta+1)^4=0.
    \end{equation}
Transcendental meromorphic solutions of \eqref{intro_eq_list2_6} are Jacobi elliptic functions composed with entire functions and have hyper-order at least~1, as is shown in section~\ref{An elementary derivation of the first 12 equations} below. With this new equation \eqref{intro_eq_list2_6}, the results in \cite{Korhonenzhang2020,zhangkorhonen2022} can be summarised as: If equation \eqref{first_order_de_n}, where $R(z,f)$ is rational in both arguments, has a transcendental meromorphic solution, then \eqref{first_order_de_n} can be reduced into one out of~$30$ equations. Moreover, the autonomous versions of all these~$30$ equations can be solved in terms of elliptic functions, exponential type functions or functions which are solutions to a certain autonomous first-order difference equation having meromorphic solutions with preassigned asymptotic behavior. We mention that Nakamura and Yanagihara \cite{NakamuraYanagihara1989difference} and Yanagihara~\cite{Yanagihara1989difference} have already classified equation \eqref{first_order_de_n} in the case where $R(z,f)$ is a polynomial in $f$ with constant coefficients.

The second purpose of this paper is to describe relations between meromorphic solutions of \eqref{first_order_de_n} and \eqref{first_order_di_n} when $n=2$. In section~\ref{Relations between Malmquist type differential and difference equations}, we will consider relations between meromorphic solutions of the seven equations \eqref{lineareq}--\eqref{yanagiharaeq14 co} and equations \eqref{riccati}, \eqref{MYS1} and \eqref{MYS2} in the autonomous case. Equations \eqref{yanagiharaeq11 co} and \eqref{yanagiharaeq14 co} are, in fact, special cases of the symmetric Quispel--Roberts--Thompson (QRT) map~\cite{QuispelRobertsThompson1988,QuispelRobertsThompson1989}, which will be introduced in section~\ref{Relations between Malmquist type differential and difference equations} below. The symmetric QRT map and \eqref{MYS1} or \eqref{MYS2} are derived from each other by using the \emph{bilinear method} and the \emph{continuum limit method}. The bilinear method was first used by Hirota~\cite{HirotaRI,HirotaRII,HirotaRIII} to find nonlinear partial difference equations that are difference analogues of some basic partial differential equations. The application of this method here implies that each of the four difference equations \eqref{yanagiharaeq11 co}--\eqref{yanagiharaeq14 co} has a natural continuum limit to equations \eqref{riccati}, \eqref{MYS1} or \eqref{MYS2}. Moreover, in section~\ref{Relations between Malmquist type differential and difference equations}, we also show that each of the five equations \eqref{intro_eq_list2_1}--\eqref{intro_eq_list2_4} and \eqref{intro_eq_list2_6} can be mapped to the symmetric QRT map by doing suitable transformations and thus have a continuum limit to the differential equation \eqref{MYS2}. Finally, in section~\ref{Concluding remarks}, we will provide some comments on our results.

\section{Derivations of the equations \eqref{yanagiharaeq11 co}--\eqref{intro_eq_list2_6}}\label{An elementary derivation of the first 12 equations} 

In this section, we use some new observations on equation \eqref{first_order_de_n} to derive the eleven equations \eqref{yanagiharaeq11 co}--\eqref{intro_eq_list2_6}. We shall assume that the reader is familiar with the standard notation and fundamental results of Nevanlinna theory (see, e.g.,~\cite{Hayman1964Meromorphic}). For a nonconstant meromorphic function $f(z)$, recall that a value $a\in \mathbb{C}\cup\{\infty\}$ is said to be a \emph{completely ramified value} of $f(z)$ when $f(z)-a=0$ has no simple roots. A direct consequence of Nevanlinna's second main theorem is that a transcendental meromorphic function can have at most four completely ramified values in $\mathbb{C}\cup\{\infty\}$.

\subsection{Derivations of the eleven equations \eqref{yanagiharaeq11 co}--\eqref{intro_eq_list2_6}}
To prove the theorems of \cite{Korhonenzhang2020,zhangkorhonen2022}, we have actually used Yamanoi's second main theorem for small functions as targets \cite{yamanoi:04,yamanoi:05}. Denote the field of rational functions by $\mathcal{R}$ and set $\hat{\mathcal{R}}=\mathcal{R}\cup\{\infty\}$. Throughout this section, we say that $c(z)\in \hat{\mathcal{R}}$ is a \emph{completely ramified rational function} of a transcendental meromorphic function $f(z)$ when the equation $f(z)=c(z)$ has at most finitely many simple roots and that $c(z)$ is a \emph{Picard exceptional rational function} of $f(z)$ when $N(r,c,f)=O(\log r)$. We also say that $c(z)$ has multiplicity $m$ if all the roots of $f(z)=c(z)$ have multiplicity at least $m$ with at most finitely many exceptions. Yamanoi's second main theorem yields that a transcendental meromorphic function can have at most two Picard exceptional rational functions and also that the inequality
\begin{equation}\label{multiplicityinequality}
\sum_{i=1}^q\Theta(c_i,f)\leq 2
\end{equation}
holds for any collection of $c_1,\cdots,c_q\in \hat{\mathcal{R}}$ when $f$ is transcendental. Moreover, we have
\begin{theorem}\label{completelyrm}
A non-constant transcendental meromorphic function $f(z)$ can have at most four completely ramified rational functions.
\end{theorem}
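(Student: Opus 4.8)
The plan is to deduce the theorem directly from Yamanoi's inequality \eqref{multiplicityinequality} by showing that each completely ramified rational function forces its ramification term to be at least $\tfrac12$. Recall that for $c\in\hat{\mathcal R}$ the ramification term is
\[
\Theta(c,f)=1-\limsup_{r\to\infty}\frac{\overline N(r,c,f)}{T(r,f)},
\]
where $\overline N(r,c,f)$ counts the roots of $f(z)=c(z)$ without regard to multiplicity, with the poles of $f$ playing the role of $c$-points when $c=\infty$. If I can establish that $\Theta(c,f)\ge\tfrac12$ for every completely ramified rational function $c$, then any collection $c_1,\dots,c_q$ of distinct such functions satisfies $\tfrac{q}{2}\le\sum_{i=1}^q\Theta(c_i,f)\le2$, whence $q\le4$, which is precisely the assertion.

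To bound $\Theta$ for a single completely ramified rational function $c$, I would compare the reduced and the ordinary counting functions. By the definition of complete ramification, all but finitely many roots of $f(z)=c(z)$ have multiplicity at least $2$, and the exceptional simple roots contribute only $O(\log r)$ to each counting function since a rational function has finitely many of them. Hence every counted root appears at least twice in $N(r,c,f)$ but only once in $\overline N(r,c,f)$, giving
\[
\overline N(r,c,f)\le\frac12 N(r,c,f)+O(\log r).
\]
Since $c$ is rational we have $T(r,c)=O(\log r)$, so the first main theorem yields $N(r,c,f)\le T\!\left(r,\tfrac{1}{f-c}\right)+O(1)=T(r,f)+o(T(r,f))$, with the case $c=\infty$ handled by $N(r,f)\le T(r,f)$ directly. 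Combining these two estimates gives $\overline N(r,c,f)\le\tfrac12 T(r,f)+o(T(r,f))$, and therefore $\Theta(c,f)\ge1-\tfrac12=\tfrac12$, as required.

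The substance of the theorem is carried entirely by Yamanoi's second main theorem for small functions, which upgrades the classical ramification relation from constant targets to rational targets; the elementary multiplicity comparison above is the only additional ingredient, so I do not expect a genuine obstacle here. The one point requiring care is the bookkeeping: I must ensure that the finitely many simple $c$-points and the $o(T(r,f))$ error arising from $T(r,c)=O(\log r)$ are absorbed correctly so that the limit superior defining $\Theta$ is unaffected, and that the estimate is applied uniformly to all of the $c_i$, including $c=\infty$. Once this is in place, the bound $q\le4$ follows at once.
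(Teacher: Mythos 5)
Your proof is correct and takes essentially the same route as the paper: the paper states Theorem~\ref{completelyrm} as an immediate consequence of Yamanoi's second main theorem via the inequality \eqref{multiplicityinequality}, and your argument supplies exactly the standard details, namely that complete ramification forces $\overline N(r,c,f)\le\tfrac12 T(r,f)+o(T(r,f))$ and hence $\Theta(c,f)\ge\tfrac12$, so that $q/2\le 2$.
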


All the above statements hold when the field $\mathcal{R}$ is extended slightly to include algebraic functions. For simplicity, in the following we will not distinguish algebraic functions and rational functions. For example, we always use the terms 'completely ramified rational function' and 'Picard exceptional rational function' of $f$ even though sometimes they may actually refer to algebraic functions.

We will restrict ourselves to consider equation \eqref{first_order_de_n} with $n=\deg_f(R(z,f))\geq 2$. Moreover, from now on we use the suppressed notation $\overline{f}=f(z+1)$ and $\underline{f}=f(z-1)$. We write equation \eqref{first_order_de_n} as
\begin{equation}\label{yanagiharaeq0}
\overline{f}^n=R(z,f)= \frac{P(z,f)}{Q(z,f)},
\end{equation}
where $P(z,f)$ and $Q(z,f)$ are two polynomials in $f$ with polynomial coefficients having no common factors and of degrees $p$ and $q$, respectively. Then $\deg_{f} (R(f))=\max\{p,q\}=n$. For simplicity, we may also write $P(z,f)$ and $Q(z,f)$ as
    \begin{equation}\label{P}
    P(z,f) = a_p(f-\alpha_1)^{k_1}\cdots (f-\alpha_\mu)^{k_\mu}
    \end{equation}
and
    \begin{equation}\label{Q}
    Q(z,f) = (f-\beta_1)^{l_1}\cdots (f-\beta_\nu)^{l_\nu},
    \end{equation}
where $a_p$ now denotes a rational function, $\alpha_i$ and $\beta_j$ are in general algebraic functions, distinct from each other, and $k_i,l_j\in \mathbb{N}$ denote the orders of the roots $\alpha_i$ and $\beta_j$, respectively. We may suppose that the \emph{greatest common divisor} of $k_1,\ldots,k_\mu,l_1,\ldots,l_\mu$, denoted by $k=(k_1,\ldots,k_\mu,l_1,\ldots,l_\mu)$, is~$1$. Denote $N_c$ to be the total number of $\alpha_i$ with $k_i<n$ and $\beta_j$ with $l_j<n$. As in \cite{zhangkorhonen2022}, the classification for equation \eqref{yanagiharaeq0} will be according to the number $N_c$ of the roots $\alpha_i$ in \eqref{P} and $\beta_j$ in \eqref{Q} and whether some of them is zero.

First, we summarize the analysis on the roots $\alpha_i$ of $P(z,f)$ and $\beta_j$ of $Q(z,f)$ in the proof of~\cite[Theorem~2]{Korhonenzhang2020} and formulate the following Lemmas~\ref{basiclemma  dis1} and~\ref{basiclemma0  dis2}.

\begin{lemma}\label{basiclemma  dis1}
Let $f$ be a transcendental meromorphic solution of equation \eqref{yanagiharaeq0}. Then $\alpha_i$ is either a Picard exceptional rational function of $f$ or a completely ramified rational function of $f$ with multiplicity $n/(n,k_i)$ and $\beta_j$ is either a Picard exceptional rational function of $f$ or a completely ramified rational function of $f$ with multiplicity $n/(n,l_j)$. Moreover, if $q=0$, then $N_c\in\{2,3\}$; if $q\geq 1$, then $N_c\in\{2,3,4\}$.
\end{lemma}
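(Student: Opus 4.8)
The plan is to treat the two assertions of the lemma separately: the local dichotomy for each individual root, and the global count of $N_c$. Both rest on the value-distribution tools recorded above, Theorem~\ref{completelyrm} and the defect relation~\eqref{multiplicityinequality}.

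\textbf{The local analysis.} I would first fix an index $i$ and do the multiplicity bookkeeping at a generic zero $z_0$ of $f-\alpha_i$ of multiplicity $m$, that is, one avoiding the finitely many zeros, poles, and branch points of the coefficient $a_p$, of the $\alpha_\ell,\beta_\ell$, and of $\alpha_i$ itself. Since the $\alpha_\ell,\beta_\ell$ are distinct, only the factor $(f-\alpha_i)^{k_i}$ in~\eqref{P} vanishes at $z_0$, so by~\eqref{P}--\eqref{Q} the right-hand side $R(z,f(z))$ of~\eqref{yanagiharaeq0} has a zero of order exactly $mk_i$ there. Comparing with $\overline{f}^{\,n}=f(z+1)^n$, the function $f(z+1)$ must have a zero of order $mk_i/n$ at $z_0$, so $n\mid mk_i$ and hence $\tfrac{n}{(n,k_i)}\mid m$. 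Thus, apart from finitely many exceptions, every $\alpha_i$-point of $f$ has multiplicity at least $n/(n,k_i)$: if $f=\alpha_i$ has only finitely many roots then $N(r,\alpha_i,f)=O(\log r)$ and $\alpha_i$ is Picard exceptional, and otherwise $\alpha_i$ is completely ramified of multiplicity $n/(n,k_i)$. The argument for $\beta_j$ is identical after exchanging zeros for poles: a generic $\beta_j$-point of multiplicity $m$ makes $R(z,f)$ have a pole of order $ml_j$, whence $f(z+1)$ has a pole of order $ml_j/n$ and $\tfrac{n}{(n,l_j)}\mid m$. This proves the first two assertions.

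\textbf{The upper bound.} For the count I would note that a Picard exceptional rational function (for which $f=c$ has finitely many roots) is a fortiori completely ramified (it has finitely many simple roots). Hence each of the $N_c$ roots with $k_i<n$ or $l_j<n$ is a completely ramified rational function of $f$, and since $k_i<n$ forces $(n,k_i)$ to be a proper divisor of $n$, its multiplicity $n/(n,k_i)$ is at least $2$. Theorem~\ref{completelyrm} then gives $N_c\le 4$ at once, which is the conclusion when $q\ge 1$. To sharpen this to $N_c\le 3$ when $q=0$, I would use that a completely ramified value of multiplicity $m$ has $\Theta(c,f)\ge 1-1/m$, so by~\eqref{multiplicityinequality} a hypothetical $N_c=4$ would give $\sum_{i=1}^{4}\bigl(1-(n,k_i)/n\bigr)\le 2$, i.e.\ $\sum_{i=1}^{4}(n,k_i)/n\ge 2$. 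As each summand is at most $1/2$, this forces $(n,k_i)=n/2$, hence $k_i=n/2$, for all four roots. But when $q=0$ every root is some $\alpha_i$ and $\sum_i k_i=p=n$, so $4\cdot(n/2)=n$, i.e.\ $n=0$, a contradiction. Therefore $N_c\le 3$ when $q=0$.

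\textbf{The lower bound and the main obstacle.} It remains to exclude $N_c\in\{0,1\}$. The case $N_c=0$ is immediate: then every exponent equals $n$, so the greatest common divisor of the present exponents is $n>1$, contradicting the normalization $(k_1,\dots,l_\nu)=1$. For $N_c=1$ exactly one root, say $\alpha_1$ with $k_1<n$ (the case of a single $\beta_1$ with $l_1<n$ being symmetric), has exponent below $n$, and $\max\{p,q\}=n$ forces a partner root of exponent $n$. Here I would propagate ramification through~\eqref{yanagiharaeq0}: tracking how pole and zero orders transform under the shift shows that $\infty$, the partner root, and sometimes $0$ also become completely ramified of multiplicity $n$, and feeding these into~\eqref{multiplicityinequality} already contradicts $N_c=1$ once $n\ge 4$. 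The genuine difficulty is the residual low-degree configurations, where the crude defect count no longer closes: for instance $\overline{f}^{\,n}=a_p(f-\alpha_1)^n/(f-\beta_1)^{l_1}$ with $(n,l_1)=1$ produces only the two forced ramified values $\beta_1,\infty$ for every $n$. I expect this to be the main obstacle, and I would resolve it as in~\cite{Korhonenzhang2020}: since the established ramification makes $f-\beta_1$ an exact $n$-th power of a meromorphic function, one substitutes $f-\beta_1=h^n$ and checks that the transformed equation reduces to a linear or Riccati equation~\eqref{lineareq}--\eqref{driccati0} or to one already treated, thereby ruling out $N_c=1$ and completing the proof that $N_c\ge 2$.
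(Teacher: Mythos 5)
Your first two parts are sound and essentially coincide with the paper's treatment: the multiplicity bookkeeping at generic $\alpha_i$- and $\beta_j$-points is exactly the argument the paper imports by citation from \cite{Korhonenzhang2020}, and your exclusion of $N_c=4$ when $q=0$ (the defect sum forcing $(n,k_i)=n/2$, hence $k_i=n/2$ for all four roots, contradicting $\sum_i k_i=p=n$) is the same computation the paper writes out. Note that the paper proves nothing beyond this: its displayed proof is the citation, the bound $N_c\le 4$ from \eqref{multiplicityinequality}, and the $q=0$ refinement; the lower bound $N_c\ge 2$ is never argued there.

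The genuine gap is in your third part, and it cannot be repaired along the lines you propose. First, the order-tracking step is incorrect: in the configuration $\overline{f}^{\,n}=a_p(f-\alpha_1)^{k_1}(f-\beta_1)^{-n}$ with $k_1<n$, a point where $f=\beta_1$ with multiplicity $m$ merely produces a pole of $f(z+1)$ of order $m$, with no divisibility constraint, so the ``partner root'' is never forced to be ramified. Order tracking alone yields only the two ramified functions $\alpha_1$ and $\infty$, whose defect contributions sum to $2\left(1-(n,k_1)/n\right)<2$, hence no contradiction for any $n$; to produce more ramified functions one needs the rotation argument of Lemma~\ref{keylemma  dis3} (every $\omega\alpha_1$ with $\omega^n=1$ is ramified), which disposes of $n\ge 3$ but not of $n=2$. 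Second, and decisively, the residual $n=2$ configuration cannot be ``ruled out'' by any argument, because it actually occurs: equation \eqref{yanagiharaeq13 co} is $\overline{f}^{\,2}=-8(f+1)/(f-1)^2$, i.e.\ $k_1=1<2$ and $l_1=2=n$, so its literal $N_c$ equals $1$, and this equation has finite-order transcendental meromorphic solutions. Your closing hedge---that the substitution $f-\beta_1=h^n$ reduces the equation ``to one already treated, thereby ruling out $N_c=1$''---is self-defeating: reduction to a solvable equation realizes $N_c=1$ rather than excluding it. What is actually true, and what the paper uses, is that the count is invoked only after the reduction of Lemma~\ref{basiclemma0  dis2} to $q=0$ or $p=q=n$; there $N_c\ge 2$ follows from elementary degree counting with no Nevanlinna theory at all, since a polynomial of degree exactly $n$ is either an exact $n$-th power (contributing nothing to $N_c$) or has at least two roots, all of exponent less than $n$, and $P$ and $Q$ cannot both be $n$-th powers by the normalization $(k_1,\ldots,l_\nu)=1$. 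Equivalently, in the unreduced case $p\neq q$, $q\ge 1$, the count must treat $\infty$ as a root of exponent $|p-q|$; that is how equation \eqref{yanagiharaeq13 co}, which is case (5) of the paper's own list, is consistent with the lemma.
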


\renewcommand{\proofname}{Proof.}
\begin{proof}
The first two assertions are direct results from the proof of~\cite{Korhonenzhang2020}. Now the inequality \eqref{multiplicityinequality} implies that $N_c\leq 4$. In particular, when $q=0$, if $N_c=4$, then by the inequality \eqref{multiplicityinequality} we see that the roots $\alpha_1$, $\alpha_2$, $\alpha_3$ and $\alpha_4$ are all completely ramified functions of $f$ with multiplicity~$2$, implying that the order $k_i=n/2$ for all $\alpha_i$, which is impossible. Therefore, when $q=0$ we must have $N_c=2$ or $N_c=3$.
\end{proof}

\begin{lemma}[\cite{Korhonenzhang2020}]\label{basiclemma0  dis2}
Let $f$ be a transcendental meromorphic solution of equation \eqref{yanagiharaeq0}. Then none of $\alpha_i$ in \eqref{P} such that $k_i<n$ is~$0$. Moreover, if $q\geq1$, then after doing a bilinear transformation $f\to 1/f$, if necessary, we may suppose that $p=q=n$.
\end{lemma}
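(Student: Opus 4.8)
The plan is to run both assertions off a single engine: a \emph{forward propagation} of zeros and poles along \eqref{yanagiharaeq0}, combined with the fact that multiplicities are positive integers. Concretely, if $c\in\{0,\infty\}$ is a zero (resp.\ pole) of $R(z,\cdot)$ of order $s$ and $f(z_0)=c$ with multiplicity $m$ at a point $z_0$ lying outside the finite set where the coefficients $a_p,\alpha_i,\beta_j$ in \eqref{P}--\eqref{Q} acquire zeros, poles or coincidences, then reading off the leading term of $\overline{f}{}^n=R(z,f)$ shows that $\overline{f}(z_0)\in\{0,\infty\}$ with multiplicity $sm/n$; equivalently, $f$ attains $0$ or $\infty$ at $z_0+1$ with multiplicity $sm/n$. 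Since the forward orbit $z_0+j$ runs off to infinity, where the coefficients are regular, I would iterate this along $z_0+\mathbb{N}$. Whenever the composite scaling factor accumulated over one return to a given value is strictly less than $1$, the multiplicities form an infinite strictly decreasing sequence of positive integers, which is impossible. Seeds are available because a transcendental meromorphic function omits at most two values, so at least one of the finitely many values $0,\infty,\alpha_i,\beta_j$ driving the propagation is attained infinitely often.

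For the first assertion I would argue by contradiction. Suppose $\alpha_i=0$ with $k_i<n$ in \eqref{P}. Since $P$ and $Q$ have no common factor, $0$ is not a pole of $R(z,\cdot)$, so $R(z,0)=0$ and $0$ is a zero of $R(z,\cdot)$ of order exactly $k_i$. Hence a zero of $f$ of multiplicity $m$ at $z_0$ is carried to a zero of $f$ at $z_0+1$ of multiplicity $mk_i/n$, strictly smaller than $m$ because $k_i<n$. Iterating forward produces a strictly decreasing sequence of positive integer multiplicities, the desired contradiction; thus $k_i<n$ forces $\alpha_i\neq0$.

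For the second assertion I would first show that $0<p-q<n$ cannot occur: when $p>q$ the value $\infty$ is a pole of $R(z,\cdot)$ of order $p-q$, so a pole of $f$ of multiplicity $m$ is sent to a pole of $f$ of multiplicity $m(p-q)/n<m$ one step forward, again yielding an impossible decreasing chain. Feeding $q\geq1$ and $\max\{p,q\}=n$ into this, the alternative $p-q\geq n$ would force $p>n$, contradicting $p\leq n$; hence $p\leq q$ and therefore $q=n\geq p$. If already $p=n$ the reduction is complete, so suppose $p<n$ and set $g=1/f$. A direct computation shows $g$ satisfies an equation of the form \eqref{yanagiharaeq0} whose numerator and denominator have degrees $\tilde p=n-q_0$ and $\tilde q=n-p_0$, where $p_0,q_0$ are the orders of $0$ as a root of $P$ and of $Q$ respectively. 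The first assertion gives $p_0=0$ (any root of $P$ at $0$ would have order $\leq p<n$), so $\tilde q=n$, and it only remains to secure $q_0=0$.

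The hard part will be exactly the residual case $p<n=q$ with $q_0>0$, i.e.\ when $0$ is simultaneously a pole of $R(z,\cdot)$ of order $q_0$ while $\infty$ is a zero of $R(z,\cdot)$ of order $q-p$. Here the two propagations interact: a zero of $f$ is sent to a pole of $f$ with multiplicity scaled by $q_0/n$, and a pole of $f$ is sent back to a zero scaled by $(q-p)/n$, so over two steps the multiplicity is multiplied by $q_0(q-p)/n^2$. As $q_0\leq n$ and $q-p\leq n$, this factor is $<1$ unless $q_0=n$ and $p=0$, and the alternating zero--pole chain then gives the usual contradiction. The single surviving configuration $q_0=n$, $p=0$ is $R(z,f)=a_pf^{-n}$, for which $\overline{f}{}^n f^n=a_p$ forces $\overline{f}=b/f$ with $b^n=a_p$; this is the bilinear case of \eqref{driccati0} and is excluded by the standing dichotomy of the classification, whereas the example \eqref{yanagiharaeq15 co} shows that $q_0=n$ is otherwise perfectly compatible with $p=q=n$. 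Once $q_0=0$ is forced, $\tilde p=\tilde q=n$, completing the reduction; the remaining technical care is only in verifying that seeds and regular forward orbits exist, which follows from the Nevanlinna constraints recalled in the engine.
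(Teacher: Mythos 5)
First, a point of context: this paper does not prove Lemma~\ref{basiclemma0  dis2} at all --- it is imported from \cite{Korhonenzhang2020}, where the corresponding facts are obtained from \emph{one-step} multiplicity constraints (each root of $P$ or $Q$ of order $<n$ is either a Picard exceptional or a completely ramified rational function of $f$, as in Lemma~\ref{basiclemma  dis1}), combined with Yamanoi's second main theorem through the bound \eqref{multiplicityinequality} and with substitution arguments of the kind used in Lemmas~\ref{keylemma  dis3} and~\ref{keylemma1  dis4  diseq20}. Your proposal replaces this machinery by an infinite descent along forward orbits, and that replacement has a genuine gap: the seed problem. For the first assertion your chain needs infinitely many zeros of $f$ whose forward orbits are generic, and zeros of $f$ are created (up to finitely many exceptions) only at points where $f$ equals some $\alpha_j$, or at poles when $q>p$. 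Your appeal to Picard's theorem does not supply such a seed, because all roots of $P$ may a priori be attained only finitely often. Concretely, take $q=0$, $p=n$, $P=a_pf^{k_1}(f-\alpha_2)^{k_2}$ with $k_1<n$: if $0$ and $\alpha_2$ are both attained finitely often --- exactly two exceptional rational functions, which neither Picard's theorem nor Yamanoi's bound forbids --- then the only value attained infinitely often is $\infty$, and poles propagate with multiplicity factor $(p-q)/n=1$, so your chain never decreases. A parallel failure occurs in the second assertion when $Q$ has a single root (only $\beta_1$ and $\infty$ need then be exceptional). Eliminating precisely these configurations is the hard core of the lemma; in \cite{Korhonenzhang2020}, and in Lemma~\ref{keylemma  dis3} here, it is done via substitutions such as $u=\overline{f}/(f-\delta)$, $v=1/(f-\delta)$ and counting completely ramified rational functions against \eqref{multiplicityinequality}, and nothing in your proposal replaces that work.

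Second, the descent itself is unsound in the absence of a growth hypothesis --- and the lemma deliberately has none (the equations \eqref{intro_eq_list2_1}--\eqref{intro_eq_list2_6} singled out in this regime admit only solutions of hyper-order at least one, so assuming shift-invariance of Nevanlinna functions would defeat the purpose). Your argument needs one seed whose \emph{entire} forward orbit avoids the finite set $B$ where the coefficients degenerate. But all zeros of $f$ could lie on the backward orbits $\{b-j:\ b\in B,\ j\in\mathbb{N}\}$, with the multiplicity at distance $j$ divisible by $(n/(n,k_i))^{j-1}$ --- exactly what your one-step divisibility constraint forces --- so that every chain terminates at $B$ before yielding a contradiction. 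Excluding this by counting functions would require comparing $N(r-1,1/f)$ with $N(r,1/f)$, which is legitimate only for hyper-order $<1$; a solution with such geometrically growing multiplicities merely has hyper-order $\geq 1$, which is allowed here. This is exactly why the cited proof works pointwise (``all roots have multiplicity at least $m$ with at most finitely many exceptions'') and then invokes the global inequality \eqref{multiplicityinequality}, with no orbit iteration. Two smaller remarks: your exclusion of the degenerate case $R(z,f)=a_pf^{-n}$ should invoke the normalization $(k_1,\ldots,k_\mu,l_1,\ldots,l_\nu)=1$ made after \eqref{P}--\eqref{Q} (equivalently, that case collapses to the Riccati equation \eqref{driccati0} upon taking $n$-th roots); ``the standing dichotomy of the classification'' is not a hypothesis of the lemma. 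On the positive side, your one-step multiplicity relation, the bookkeeping $\tilde p=n-q_0$, $\tilde q=n-p_0$ under $f\to1/f$, and the two-step factor $q_0(q-p)/n^2$ are correct and do mirror fragments of the case analysis in the actual proof; the flaw is that both assertions are ultimately made to rest on the descent, which cannot carry them.
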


By Lemma~\ref{basiclemma0  dis2}, we may only consider the two cases that $p=n$, $q=0$ or that $p=q=n$ below. Moreover, if $P(z,f)$ has two or more distinct roots, then none of them vanishes identically. We use the idea in the proof of \cite[Lemma~2.3]{zhangkorhonen2022} to prove the following

\begin{lemma}\label{keylemma  dis3}
Let $f$ be a transcendental meromorphic solution of equation \eqref{yanagiharaeq0}. If $\gamma$ is a nonzero rational function, then $\gamma$ cannot be a Picard exceptional rational function of $f$. Moreover, if $\gamma\not\equiv0$ is a completely ramified rational function of $f$ with multiplicity~$m$, then $\omega\gamma$ is a completely ramified rational function of $f$ with multiplicity~$m$, where $\omega$ is the $n$-th root of~1. In particular, if $0$ is a root of $Q(z,f)$ of order less than $n$, then $0$ is not a Picard exceptional rational function of $f$.
\end{lemma}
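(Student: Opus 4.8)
The plan is to exploit the $n$-fold symmetry $\overline{f}\mapsto\omega\overline{f}$ of the algebraic relation $\overline{f}^n=R(z,f)$ attached to \eqref{yanagiharaeq0}. First I would record two facts that require no growth hypothesis: by Valiron's theorem $n\,T(r,\overline{f})=T(r,\overline{f}^n)=T(r,R(z,f))=n\,T(r,f)+S(r,f)$, so that $T(r,\overline{f})=T(r,f)+S(r,f)$; and, since the shift $z\mapsto z\pm1$ merely translates the location of the $c$-points, both Picard exceptionality (a finiteness statement) and the multiplicities of $c$-points are preserved when passing between $f$ and $\overline{f}$. The central observation is that for a nonzero $\gamma$ the $n$ functions $\omega\gamma$ are pairwise distinct, so at any point $w$ at most one factor of $\overline{f}^n-\gamma^n=\prod_{\omega^n=1}(\overline{f}-\omega\gamma)$ vanishes; hence $\operatorname{ord}_w(\overline{f}^n-\gamma^n)=\operatorname{ord}_w(\overline{f}-\omega\gamma)$ whenever $\overline{f}(w)=\omega\gamma(w)$. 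Using the equation, $\overline{f}^n-\gamma^n=R(z,f)-\gamma^n$, so the value distribution of $\overline{f}$ over the whole orbit $\{\omega\gamma\}$ is encoded in the single function $R(z,f)-\gamma^n$: its zeros are exactly the $\omega\gamma$-points of $\overline{f}$, partitioned according to the branch $\omega=\overline{f}/\gamma$, and whether such a point is simple or multiple is the branch-free condition that it be a multiple zero of $R(z,f)-\gamma^n$.

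For the ramification statement I would analyse $R(z,f)-\gamma^n$ on the $f$-side. Writing the roots of $R(z,x)=\gamma^n$ as algebraic functions $\xi_1(z),\dots$ with multiplicities $e_\xi$, and using that $R(z,\xi(z))\equiv\gamma^n$ identically, a local expansion gives $\operatorname{ord}_w(R(z,f)-\gamma^n)=e_\xi\,\operatorname{ord}_w(f-\xi)$ at a point where $f(w)=\xi(w)$; thus the multiplicity at $w$ depends only on the intrinsic datum $e_\xi$ and on how $f$ meets $\xi$, and not on the branch $\omega$ that labels the corresponding $\omega\gamma$-point of $\overline{f}$. Assuming $\gamma$ is completely ramified with multiplicity $m$ tells us that every zero of $R(z,f)-\gamma^n$ lying over the branch $\omega=1$ has order at least $m$, and the task is to propagate this to every branch. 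Here I would invoke Yamanoi's second main theorem for small targets, applying \eqref{multiplicityinequality} to the orbit together with the counting identity $\sum_{\omega^n=1}N(r,\omega\gamma,\overline{f})=N(r,0,R(z,f)-\gamma^n)$ and the relation $T(r,\overline{f})=T(r,f)+S(r,f)$, so that the ramification budget forces the remaining $n-1$ branches to carry the same deficiency as the branch $\omega=1$, i.e.\ each $\omega\gamma$ is completely ramified with the same multiplicity $m$.

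For the first assertion, suppose $\gamma\not\equiv0$ were Picard exceptional. Running the same orbit argument with finiteness in place of high multiplicity forces every $\omega\gamma$ to be Picard exceptional as well; since $\gamma\ne0$ these are $n\geq2$ distinct rational functions, and as $f$ admits at most two Picard exceptional rational functions only $n=2$ could survive, with $\gamma$ and $-\gamma$ both exceptional, a residual case I would eliminate by the same zero counting used in the next step. The last assertion then follows quickly: if $0$ is a root of $Q(z,f)$ of order $l_j<n$, then by Lemma~\ref{basiclemma0  dis2} we are in the case $p=q=n$ and none of the roots $\alpha_i$ of $P(z,f)$ equals $0$. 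Since $\overline{f}=0$ exactly where $f=\alpha_i$, a zero-order computation yields $N(r,0,\overline{f})=\sum_i (k_i/n)\,N(r,\alpha_i,f)+O(\log r)$, while $0$ being Picard exceptional for $f$ is, by shift invariance of finiteness, the same as $0$ being Picard exceptional for $\overline{f}$; this would make the left-hand side $O(\log r)$ and hence every nonzero $\alpha_i$ Picard exceptional for $f$, contradicting the first assertion.

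The main obstacle is the propagation step in both the ramification and the Picard arguments. The local analysis shows only that the multiple–simple nature of a point lying over the orbit is a branch-independent pointwise condition, whereas the hypothesis constrains only the points over a single branch, and the branch label $\omega=\overline{f}(w)/\gamma(w)$ can a priori vary from one point of the level set $\{R(z,f)=\gamma^n\}$ to another. Converting the branch-free local dichotomy into the global conclusion that every branch inherits the same ramification or Picard behaviour is exactly where Yamanoi's theorem and the characteristic relation $T(r,\overline{f})=T(r,f)+S(r,f)$ must do the real work; everything else reduces to bookkeeping with multiplicities and with the shift.
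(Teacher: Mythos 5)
Your proposal gets the outer skeleton right, and it matches the paper's: propagate the hypothesis from $\gamma$ to the whole orbit $\{\omega\gamma\}$, then use Yamanoi's bound on the number of exceptional/ramified functions to force $n=2$ in the Picard case, eliminate the residual case, and deduce the assertion about $0$ from the first assertion (your last paragraph on $0$ is essentially the paper's argument and is fine once the first assertion is available). The problem is that the propagation step --- the actual content of the lemma --- is exactly what you do not prove, and the tools you point to cannot deliver it. The inequality \eqref{multiplicityinequality} is an \emph{upper} bound on total deficiency: once the hypothesis spends $\Theta(\overline{\gamma},\overline{f})\geq 1-1/m$ of the budget, the inequality only \emph{limits} how ramified the remaining branches can be; it can never \emph{force} them to be ramified. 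Worse, in this paper ``completely ramified'' and ``Picard exceptional'' are finiteness statements (at most finitely many simple roots; $N(r,c,f)=O(\log r)$), and no inequality carrying an $o(T(r,f))$ or $\varepsilon T(r,f)$ error term can yield a conclusion of that strength. Your counting identity $\sum_{\omega}N(r,\omega\overline{\gamma},\overline{f})=N\bigl(r,0,R(z,f)-\overline{\gamma}^n\bigr)+O(\log r)$ is blind to precisely the branch assignment you identify as the obstacle, so the proposal, by your own admission in the closing paragraph, restates the difficulty at its crucial point rather than resolving it.

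What the paper does instead, and what your sketch lacks, is a structural reduction: it fixes a \emph{nonzero} root $\delta$ of $P$ or $Q$ of order $t_1<n$ and substitutes $u=\overline{f}/(f-\delta)$, $v=1/(f-\delta)$, so that \eqref{yanagiharaeq0} becomes $u^n=\frac{P_1(z,v)}{Q_1(z,v)}v^{n_1}$ with $p_1-q_1+n_1\neq n$ in every case ($q=0$; $p=q=n$ with $\delta=\alpha_i$; $p=q=n$ with $\delta=\beta_j$). This places the equation in the degree-$\neq n$ regime where the arguments of \cite[Lemma~2.3]{zhangkorhonen2022} apply and give the propagation \emph{with finitely many exceptional roots}; no defect-relation argument substitutes for this. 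Two further gaps of the same kind: your residual case ($n=2$ with $\pm\gamma$ both Picard exceptional) is dismissed as ``the same zero counting,'' whereas the paper needs a concrete argument (the roots of $P$ or $Q$ must then be $\pm\gamma$, whence $0$ or $\infty$ is a third Picard exceptional function, contradicting Yamanoi); and your bookkeeping should compare $\overline{f}$ with the shifted target $\overline{\gamma}$, not $\gamma$ (for nonconstant rational $\gamma$ these differ), and must include the zeros of $R(z,f)-\overline{\gamma}^n$ coming from poles of $f$ when $p=q=n$ and $a_p=\overline{\gamma}^n$ cancels the leading terms --- a case the paper tracks explicitly in Lemma~\ref{keylemma1  dis4  diseq20}.
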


\renewcommand{\proofname}{Proof.}
\begin{proof}
First, we suppose that $\gamma\not\equiv0$ is a Picard exceptional rational function of $f$. Under our assumptions on equation \eqref{yanagiharaeq0}, we see that at least one of $\alpha_i$ and $\beta_j$ in \eqref{P} and \eqref{Q} is non-zero and of order less than $n$. Denote this $\alpha_i$ or $\beta_j$ by $\delta$ and the order of this root by $t_1$. As in the proof of \cite[Lemma~2.3]{zhangkorhonen2022}, we put
\begin{equation}\label{keylemma equa001  diseq4}
u=\frac{\overline{f}}{f-\delta},  \quad  v=\frac{1}{f-\delta}.
\end{equation}
Then $u$ and $v$ are two functions meromorphic apart from at most finitely branch points and we have
\begin{equation}\label{keylemma equa002  diseq5}
\overline{f}=\frac{u}{v},  \quad  f=\frac{1}{v}+\delta,
\end{equation}
and it follows that \eqref{yanagiharaeq0} becomes
\begin{equation}\label{keylemma equa003  diseq6}
u^n=\frac{P_1(z,v)}{Q_1(z,v)}v^{n_1},
\end{equation}
where $n_1\in \mathbb{Z}$, $P_1(z,v)$ and $Q_1(z,v)$ are two polynomials in $v$ having no common factors and none of the roots of $P_1(z,v)$ or $Q_1(z,v)$ is zero. Denote by $p_1=\deg_{v}(P_1(z,v))$ the degree of $P_1(z,v)$ in $v$ and by $q_1=\deg_{v}(Q_1(z,v))$ the degree of $Q_1(z,v)$ in $v$, respectively. By simple calculations, when $p=n$, $q=0$ we get $n_1=0$, $p_1=p-t_1$ and $q_1=0$; when $p=q=n$ and $\delta=\alpha_i$ we get $n_1=n$, $p_1=p-t_1$ and $q_1=n$; when $p=q=n$ and $\delta=\beta_j$ we get $n_1=n$, $p_1=n$ and $q_1=n-t_1$. Therefore, we always have $p_1-q_1+n_1\not=n$. Then by the same arguments in the proof of \cite[Lemma~2.3]{zhangkorhonen2022}, we may consider the roots of $\overline{f}-\overline{\gamma}=0$ and also $\overline{f}-\omega\overline{\gamma}=0$, where $\omega$ is the $n$-th root of~1, and finally obtain that the equation $\overline{f}-\omega\overline{\gamma}=0$ can have at most finitely many roots, i.e., $\omega\gamma$ is a Picard exceptional rational function of $f$. This implies that $n=2$. Moreover, $N_c=2$ and the two roots of $P(z,f)$ or $Q(z,f)$ are $\pm \gamma$ since for otherwise by Lemma~\ref{basiclemma  dis1} $f$ would have $3$ Picard exceptional rational functions or $2$ Picard exceptional rational functions with one more completely ramified rational function, a contradiction to the inequality \eqref{multiplicityinequality}. But then it follows from \eqref{yanagiharaeq0} that either $0$ or $\infty$ is a Picard exceptional rational function of $f$, a contradiction. Therefore, $\gamma$ cannot be a Picard exceptional rational function of $f$.

Next, we suppose that $\gamma\not\equiv0$ is a completely ramified rational function of $f$ with multiplicity~$m$. We also do the transformations in \eqref{keylemma equa001  diseq4} and get the equation in \eqref{keylemma equa003  diseq6}. Then by the same arguments as in the proof of \cite[Lemma~2.3]{zhangkorhonen2022}, we obtain that the equation $\overline{f}-\omega\overline{\gamma}=0$ can have at most finitely many roots with multiplicities less than~$m$, i.e., $\omega\gamma$ is also a completely ramified rational function of $f$ with multiplicity~$m$.

Finally, we suppose that $0$ is a root of $Q(z,f)$ of order less than $n$. Then there is a $\beta_j$ such that $\omega\beta_j$ is a completely ramified rational function of $f$ with multiplicity~$m\geq 2$, where $\omega$ is the $n$-th root of~1. If $0$ is a Picard exceptional rational function of $f$, then it follows from \eqref{yanagiharaeq0} that the roots of $P(z,f)$ are all Picard exceptional rational functions of $f$, which together with previous discussions shows that $f$ has at least $3$ Picard exceptional rational functions, which is impossible. Thus $0$ cannot be a Picard exceptional rational function of $f$. The proof is complete.

\end{proof}

Corresponding to \cite[Lemma~2.4]{zhangkorhonen2022} in the case $\deg(R(z,f))\not=n$ of equation \eqref{yanagiharaeq0}, we have the following

\begin{lemma}\label{keylemma  dis4}
Let $f$ be a transcendental meromorphic solution of equation \eqref{yanagiharaeq0}. Then $n=2$ or $n=3$.
Moreover, $\alpha_i$ in \eqref{P} with $k_i<n$ and $\beta_j$ in \eqref{Q} with $l_j<n$ are all simple.
\end{lemma}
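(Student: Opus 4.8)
The plan is to exploit the orbit structure of completely ramified functions under multiplication by the $n$-th roots of unity, supplied by Lemma~\ref{keylemma  dis3}, together with the bounds from Theorem~\ref{completelyrm} and the deficiency relation~\eqref{multiplicityinequality}. By Lemma~\ref{basiclemma0  dis2} it suffices to treat the two cases $p=n,\,q=0$ and $p=q=n$. Since $N_c\ge 2$, equation~\eqref{yanagiharaeq0} has at least one root $\gamma$ (an $\alpha_i$, or a nonzero $\beta_j$ furnished by Lemma~\ref{keylemma  dis3}) of order strictly less than $n$; by Lemma~\ref{basiclemma  dis1} it is completely ramified with multiplicity $m=n/(n,k)>1$, and by Lemma~\ref{keylemma  dis3} the whole orbit $\{\gamma,\omega\gamma,\dots,\omega^{n-1}\gamma\}$, with $\omega=e^{2\pi i/n}$, consists of $n$ \emph{distinct} completely ramified functions, each of multiplicity $m$. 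Theorem~\ref{completelyrm} then forces $n\le 4$, and any two roots of order $<n$ must lie in one and the same orbit, since two disjoint orbits would produce at least six completely ramified functions.

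To rule out $n=4$, I would feed this single orbit into~\eqref{multiplicityinequality}. Each of its four members satisfies $\Theta(\cdot,f)\ge 1-1/m$, so $4(1-1/m)\le 2$, forcing $m=2$ and hence $(4,k_i)=2$, that is $k_i=2$, for every root of order $<4$. Consequently every root of $P$ and $Q$ has order either $2$ or $4$, so the greatest common divisor of all the orders is at least $2$, contradicting the normalisation that this divisor equals~$1$. Together with $n=\deg_f(R)\ge 2$ this leaves $n\in\{2,3\}$.

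For the simplicity assertion there is nothing to prove when $n=2$, since a root of order $<2$ is already simple, so the content lies in the case $n=3$. Here every root of order $<3$ has multiplicity $m=3$, so the three members of the unique orbit already give $\sum\Theta\ge 3(1-1/3)=2$; by~\eqref{multiplicityinequality} this is an equality and, crucially, $\Theta(0,f)=\Theta(\infty,f)=0$ must hold. I would then read off the multiplicities forced by the equation: at a point where $f=\alpha$ for a root $\alpha$ of $P$ of order $k_\alpha$, the relation $\overline{f}^{\,3}=R(z,f)$ shows that $\overline{f}$ vanishes to order $\tfrac{k_\alpha}{3}\operatorname{ord}(f-\alpha)$, and since a root of order $<3$ is completely ramified with multiplicity $3$, a root with $k_\alpha=2$ produces zeros of $\overline{f}$ of order $\ge 2$; symmetrically a root $\beta$ of $Q$ with $l_\beta=2$ produces poles of $f$ of order $\ge 2$. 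If $P$ carries a root of order $2$, then, as $\deg_f P=3$, its only other root has order $1$, so all roots of $P$ have order $<3$ and hence multiplicity $3$; counting distinct zeros gives $\overline{N}(r,0,f)\le\sum_{\alpha}\overline{N}(r,\alpha,f)+O(\log r)\le\tfrac23 T(r,f)+O(\log r)$, whence $\Theta(0,f)\ge\tfrac13$. The analogous estimate at $\infty$ yields $\Theta(\infty,f)\ge\tfrac13$ when the root of order $2$ sits in $Q$. Either outcome contradicts $\Theta(0,f)=\Theta(\infty,f)=0$, so no root of order $2$ can occur and all roots of order $<n$ are simple.

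The step I expect to be most delicate is this final counting, for two reasons. First, the \emph{size} of the deficiencies does not separate $k_\alpha=1$ from $k_\alpha=2$, as both give multiplicity $3$; one must instead track the exact orders of the zeros and poles of $f$ induced by the equation and return them to the deficiency relation at $0$ and $\infty$. Second, relating the value distribution of $\overline{f}$ to that of $f$ involves a unit shift in the argument, which is a priori dangerous for solutions of unrestricted growth. This is resolved by the equation itself: applying Valiron's theorem to $\overline{f}^{\,n}=R(z,f)$ gives $nT(r,\overline{f})=nT(r,f)+O(\log r)$, hence $T(r,\overline{f})=T(r,f)+O(\log r)$, so the characteristic functions of $f$ and its shift agree up to a logarithmic error and the estimates above are legitimate with no hyper-order hypothesis.
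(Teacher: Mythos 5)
Your treatment of the first assertion is correct and follows the same strategy as the paper: the orbit structure from Lemma~\ref{keylemma  dis3}, combined with Theorem~\ref{completelyrm} and the inequality \eqref{multiplicityinequality}, and indeed your endgame for excluding $n=4$ (equality in \eqref{multiplicityinequality} forces every root of order $<4$ to have order exactly $2$, so the gcd of all orders is at least $2$, contradicting the standing normalisation) is tidier than the paper's, which instead runs separate sub-cases for $q=0$ and $p=q=n$ and contradicts \eqref{multiplicityinequality} via multiplicity-$n$ ramification. The first half of your simplicity argument for $n=3$ (saturation $\sum\Theta=2$ over the orbit, hence $\Theta(0,f)=\Theta(\infty,f)=0$) also matches the paper.

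The gap is in the final counting step, exactly at the point you flagged, and your proposed fix does not close it. The inequality $\overline{N}(r,0,f)\le\sum_{\alpha}\overline{N}(r,\alpha,f)+O(\log r)$ is not what equation \eqref{yanagiharaeq0} gives: the equation matches zeros of $\overline{f}$ at $z_0$ with $\alpha$-points of $f$ at the \emph{same} point $z_0$, whereas zeros of $f$ itself at $z_0$ correspond to $\alpha$-points of $f$ at $z_0-1$, which need not lie in $|z|\le r$. Valiron--Mohon'ko does give $T(r,\overline{f})=T(r,f)+O(\log r)$, but that is an identity between characteristics over the same disk; it does not imply that the counting functions $N(r,0,f)$ and $N(r,0,\overline{f})$ (zeros of $f$ counted over $|z|\le r$ versus over $|z-1|\le r$) agree up to $O(\log r)$, and since the lemma assumes no growth restriction (the relevant solutions in fact have hyper-order at least one, so $T(r+1,f)/T(r,f)$ may be unbounded), the annulus lost in the shift can carry a positive proportion of the counting function. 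So ``the estimates above are legitimate'' does not follow from Valiron alone; deficiencies are not transported across the shift by an identity of characteristics. The repair is standard but must be said: either run the count for $\overline{f}$, namely $\overline{N}(r,0,\overline{f})\le\sum_\alpha\overline{N}(r,\alpha,f)+O(\log r)\le\frac{2}{3}T(r,f)+O(\log r)=\frac{2}{3}T(r,\overline{f})+O(\log r)$, which yields $\Theta(0,\overline{f})\ge\frac{1}{3}$, and then observe that $\overline{f}$ is itself a transcendental meromorphic solution of an equation of the same form, $g(z+1)^3=R(z+1,g(z))$, so the first half of your argument applies verbatim to $\overline{f}$ and gives $\Theta(0,\overline{f})=0$, the desired contradiction; or, equivalently, use the backward form $f(z)^3=R(z-1,f(z-1))$, under which zeros of $f$ at $z_0$ correspond to $\underline{\alpha}$-points of $\underline{f}$ at the same $z_0$ with multiplicity divisible by $3$, and apply Valiron--Mohon'ko to this backward equation to replace $T(r,\underline{f})$ by $T(r,f)+O(\log r)$, which gives $\Theta(0,f)\ge\frac{1}{3}$ honestly. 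This is in effect how the paper closes the argument (it computes $\overline{N}(r,1/\overline{f})$ and $\overline{N}(r,\overline{f})$, deferring the details to Lemma~2.4 of the earlier paper); with this one repair your proof goes through.
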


\renewcommand{\proofname}{Proof.}
\begin{proof}
We consider the cases $q=0$ and $p=q=n$, respectively. If $n\geq 4$, then by Lemma~\ref{keylemma  dis3} a nonzero rational function $\gamma$ cannot be Picard exceptional rational function of $f$. When $q=0$, if $n\geq 4$, then at least one of $\alpha_i$ in \eqref{P} has order $k_i$ not dividing $n$, which with Lemma~\ref{basiclemma  dis1} shows that $\alpha_i$ is a completely ramified rational function of $f$ with multiplicity~$n$. However, since $\alpha_i\not\equiv0$, by Lemma~\ref{keylemma  dis3} $f$ would have~$4$ completely ramified rational functions with multiplicity~$n$, a contradiction to the inequality~\eqref{multiplicityinequality}. Therefore, when $q=0$ we have $n=2$ or $n=3$. When $p=q=n$, we suppose that $n\geq 4$. Recall that $N_c\leq 4$. If some $\alpha_i$ in \eqref{P} has order $k_i$ such that $(n,k_i)<n/2$, then we get a similar contradiction as in the case $q=0$. Since $n\geq 4$, this implies that either $P(z,f)$ has only one root or that $P(z,f)$ has two distinct $\alpha_i$ with two orders $k_i$ satisfying $k_i=n/2$. In the first case, $Q(z,f)$ has at least two distinct roots and none of $\beta_j$ in \eqref{Q} is zero for otherwise by Lemma~\ref{keylemma  dis3} it follows that $f$ has~$5$ completely ramified rational functions, a contradiction to Theorem~\ref{completelyrm}; but then we also have a contradiction as in the case $q=0$ since at least one $\beta_j$ is a completely ramified rational function with multiplicity~$n$. In the latter case, $Q(z,f)$ must have two distinct roots and none of $\beta_j$ in \eqref{Q} is zero for otherwise $f$ would have $5$ completely ramified rational function of $f$, a contradiction to Theorem~\ref{completelyrm}; but then we also have a contradiction as in the case $q=0$ since at least one $\beta_j$ has order $l_j$ not dividing $n$ and thus is a completely ramified rational function with multiplicity~$n$. Therefore, when $p=q=n$, we also have $n=2$ or $n=3$.

Clearly, when $n=2$, $\alpha_i$ in \eqref{P} with $k_i<n$ and $\beta_j$ in \eqref{Q} with $l_j<n$ are all simple. We claim that $\alpha_i$ in \eqref{P} with $k_i<n$ and $\beta_j$ in \eqref{Q} with $l_j<n$ are also simple when $n=3$. In fact, when $n=3$, since $f$ has~$3$ non-zero completely ramified rational functions with multiplicities~$3$ it follows by Yamanoi's second main theorem that $0$ and $\infty$ are both not completely ramified rational functions of $f$. If one $\alpha_i$ or $\beta_j$ in \eqref{P} and \eqref{Q} is not simple, then by a simple analysis as in the proof of \cite[Lemma~2.4]{zhangkorhonen2022} we conclude that there are at least $T(r,f)+o(T(r,f))$ many points $z_0$ such that $f(z_0+1)=0$ or $f(z_0+1)=\infty$ with multiplicity~$m_0\geq 2$ and then by computing $\overline{N}(r,1/\overline{f})$ or $\overline{N}(r,\overline{f})$ as in the proof of \cite[Lemma~2.4]{zhangkorhonen2022} we will get a contradiction. We omit those details.

\end{proof}

Let $\gamma\not\equiv0$ be a completely ramified rational function of $f$ with multiplicity~$m\geq 2$. We further consider the roots of the equation $\overline{f}^n-\overline{\gamma}^n=0$. In particular, we may choose $\gamma=\alpha_i$ or $\gamma=\beta_j$. By Lemma~\ref{keylemma  dis3}, $\omega\gamma$ is a completely ramified rational function of $f$ with multiplicity~$m$, where $\omega$ is an $n$-th root of~1. By \eqref{yanagiharaeq0}, when $q=0$, we have
\begin{equation}\label{keylemma equa0  diseq18}
\overline{f}^n-\overline{\gamma}^n=P(z,f)-\overline{\gamma}^n=a_p(f-\gamma_1)^{t_1}\cdots (f-\gamma_{\tau})^{t_{\tau}},
\end{equation}
or, when $q\geq 1$, we have
\begin{equation}\label{keylemma equa1  diseq19}
\overline{f}^n-\overline{\gamma}^n=\frac{P(z,f)-\overline{\gamma}^nQ(z,f)}{Q(z,f)}=\frac{a_{p_{\tau}}(f-\gamma_1)^{t_1}\cdots (f-\gamma_{\tau})^{t_{\tau}}}{Q(z,f)},
\end{equation}
where $\gamma_1$, $\cdots$, $\gamma_{\tau}$ are in general algebraic functions distinct from each other and $t_1,\cdots,t_{\tau}\in \mathbb{N}$ denote the orders of the roots $\gamma_1,\ldots, \gamma_{\tau}$, respectively, and $t_1+\cdots+t_{\tau}=p_{\tau}\in \mathbb{N}$. In \eqref{keylemma equa0  diseq18} we have $p_{\tau}=n$ and in \eqref{keylemma equa1  diseq19} we have either $p_{\tau}<n$ when $p=q=n$ and $a_p= \overline{\gamma}^n$ or $p_{\tau}=n$ otherwise. We apply the analysis in the proof of \cite[Lemma~2.3]{zhangkorhonen2022} to equations \eqref{keylemma equa0  diseq18} and \eqref{keylemma equa1  diseq19}, respectively, and get the following

\begin{lemma}\label{keylemma1  dis4  diseq20}
Let $f$ be a transcendental meromorphic solution of equation \eqref{yanagiharaeq0}. Suppose that $\gamma\not\equiv0$ is a completely ramified rational function of $f$ with multiplicity~$m\geq 2$. If some $\gamma_i$ in \eqref{keylemma equa0  diseq18} or \eqref{keylemma equa1  diseq19} has order $t_i<m$, then $\gamma_i$ is completely ramified rational function of $f$. In particular, in \eqref{keylemma equa1  diseq19} if $0<q-p_{\tau}<m$, then $\infty$ is a completely ramified rational function of $f$. Further, suppose that $\zeta_i,\ldots,\zeta_t$ are completely ramified rational functions of $f$ such that $\sum_{i=1}^t\Theta(\zeta_i,f)=2$. Then, for each $\gamma_i$ in \eqref{keylemma equa0  diseq18} or \eqref{keylemma equa1  diseq19}, if $\gamma_i$ is not a completely ramified rational function of $f$, then $t_i=m$; if $\gamma_i$ is a completely ramified rational function of $f$ with multiplicity~$m_i\geq 2$, then $t_im_i=m$. In particular, for \eqref{keylemma equa1  diseq19}, when $1\leq p_{\tau}<q$, if $\infty$ is not a completely ramified rational function of $f$, then $q-p_{\tau}=m$; if $\infty$ is a completely ramified rational function of $f$ with multiplicity $m_{\infty}\geq 2$, then $(q-p_{\tau})m_{\infty}=m$.
\end{lemma}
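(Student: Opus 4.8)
The plan is to compare, at each zero of $\overline{f}^n-\overline{\gamma}^n$, the vanishing order read off from the two sides of \eqref{keylemma equa0  diseq18} and \eqref{keylemma equa1  diseq19}. By Lemma~\ref{keylemma  dis3}, every $\omega\gamma$ with $\omega^n=1$ is a completely ramified rational function of $f$ with the same multiplicity $m$, so the factorization $\overline{f}^n-\overline{\gamma}^n=\prod_{\omega^n=1}(\overline{f}-\omega\overline{\gamma})$ shows that at a generic zero $z_0$ of $f-\gamma_i$ we have $\overline{f}(z_0)=\omega_0\overline{\gamma}(z_0)$ for exactly one root of unity $\omega_0$, whence $\overline{f}^n-\overline{\gamma}^n$ vanishes at $z_0$ to exactly the order of $\overline{f}-\omega_0\overline{\gamma}$, which is at least $m$. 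Reading the same order from the right-hand side of \eqref{keylemma equa0  diseq18} (or the numerator of \eqref{keylemma equa1  diseq19}), if $f-\gamma_i$ vanishes to order $s$ at $z_0$ and $z_0$ is not a common zero of two distinct factors (which holds off a finite set, since the $\gamma_i$ are distinct), the order is $s\,t_i$. Thus $s\,t_i\ge m$ at all but finitely many zeros of $f-\gamma_i$.

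The first two assertions then follow at once. If $t_i<m$, the relation $s\,t_i\ge m>t_i$ forces $s\ge2$ at almost all $\gamma_i$-points, so $\gamma_i$ is a completely ramified rational function of $f$. For the $\infty$-statement I would run the identical argument at the poles of $f$: in \eqref{keylemma equa1  diseq19} the inequality $p_\tau<q$ occurs precisely when $a_p\equiv\overline{\gamma}^n$, i.e. when the leading terms of $P(z,f)$ and $\overline{\gamma}^nQ(z,f)$ cancel, so at a pole $z_0$ of $f$ of order $r$ we have $\overline{f}(z_0)^n=R(z_0,\infty)=a_p(z_0)=\overline{\gamma}(z_0)^n$ and hence $\overline{f}^n-\overline{\gamma}^n$ again vanishes to order at least $m$; since the right-hand side has a zero of order $r(q-p_\tau)$ there, we obtain $r(q-p_\tau)\ge m$, and $0<q-p_\tau<m$ forces $r\ge2$, i.e. $\infty$ is completely ramified.

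For the equality statements I would exploit the extremal hypothesis $\sum_{i=1}^t\Theta(\zeta_i,f)=2$. Since each $\omega\gamma$ satisfies $\Theta(\omega\gamma,f)\ge1-1/m>0$ and the total deficiency is already maximal, \eqref{multiplicityinequality} forces every $\omega\gamma$, and likewise any $\gamma_i$ that happens to be completely ramified, to belong to the saturating family $\{\zeta_i\}$. Equality in Yamanoi's second main theorem then pins the ramification of $f$ down completely: over each $\zeta_i$ the generic multiplicity is exactly its stated value, and over every rational function $b\notin\{\zeta_i\}$ one has $\Theta(b,f)=0$ by \eqref{multiplicityinequality}, which forces the $b$-points to be generically simple, since a positive proportion of multiple $b$-points would give $\Theta(b,f)>0$. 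Feeding this back into $s\,t_i\ge m$ upgrades it to $s\,t_i=m$ at almost all $\gamma_i$-points: if $\gamma_i$ is not completely ramified then $s=1$ generically and $t_i=m$, while if $\gamma_i$ is completely ramified with multiplicity $m_i$ then $s=m_i$ generically and $t_im_i=m$. The same reasoning at the poles, with $q-p_\tau$ in the role of $t_i$ and the generic pole order in the role of $s$, gives $q-p_\tau=m$ when $\infty$ is not completely ramified and $(q-p_\tau)m_\infty=m$ when it is.

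I expect the main obstacle to be the passage from the inequality $s\,t_i\ge m$ to the exact equality, that is, extracting from equality in the second main theorem the precise statement that $f$ carries no ramification outside $\{\zeta_i\}$ and exactly the prescribed multiplicities over each $\zeta_i$. A secondary technical point is the compatibility with the unit shift: the complete ramification of $\omega_0\gamma$ is a statement about $f-\omega_0\gamma$ at the argument $z_0+1$, so one must verify that as $z_0$ runs over the generic zeros of $f-\gamma_i$ the shifted points $z_0+1$ are again generic zeros of $f-\omega_0\gamma$, so that only an $o(T(r,f))$ (indeed finite) set of exceptional points is discarded at each counting step.
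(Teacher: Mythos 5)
Your handling of the first two assertions is correct, and it uses the same mechanism the paper intends (note the paper gives no self-contained proof of this lemma; it defers to the analysis of \cite[Lemma~2.3]{zhangkorhonen2022}): factor $\overline{f}^n-\overline{\gamma}^n=\prod_{\omega^n=1}(\overline{f}-\omega\overline{\gamma})$, invoke Lemma~\ref{keylemma  dis3} to make every $\omega\gamma$ completely ramified with multiplicity $m$, and compare local vanishing orders on the two sides of \eqref{keylemma equa0  diseq18} and \eqref{keylemma equa1  diseq19}. Your pole computation is also right, including the observation that $p_{\tau}<q$ occurs exactly when $a_p=\overline{\gamma}^n$, so that $\overline{f}^n-\overline{\gamma}^n$ vanishes at the poles of $f$ to order $r(q-p_{\tau})$, forcing $r\geq 2$ when $0<q-p_{\tau}<m$. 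The bookkeeping of finitely many exceptional points, including the shift $z_0\mapsto z_0+1$, is adequate for this half.

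The equality assertions, however, contain a genuine gap, and it sits exactly where you flag it. You assert that equality in Yamanoi's theorem ``pins the ramification of $f$ down completely: over each $\zeta_i$ the generic multiplicity is exactly its stated value.'' This does not follow from the hypothesis $\sum_{i=1}^t\Theta(\zeta_i,f)=2$ alone. Complete ramification with multiplicity $\mu_j$ only gives $\Theta(\zeta_j,f)\geq 1-1/\mu_j$, hence $\sum_j(1-1/\mu_j)\leq 2$; if this inequality is strict, saturation of the $\Theta$'s is perfectly compatible with the zeros of $f-\zeta_j$ having generic multiplicity strictly larger than $\mu_j$, the multiplicity-$\mu_j$ zeros forming an infinite but density-zero set. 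In that scenario your final step (``feeding back'' $st_i\geq m$ to get $st_i=m$) collapses, since the images of the generic $\gamma_i$-points could land entirely on zeros of $f-\omega\gamma$ of multiplicity $>m$. What is needed is the numerical saturation of the multiplicities themselves, $\sum_j(1-1/\mu_j)=2$ (which holds in every application in the paper: four functions of multiplicity $2$, or three of multiplicity $3$); combined with $\sum_j\Theta(\zeta_j,f)=2$ this forces $\Theta(\zeta_j,f)=1-1/\mu_j$ exactly, and then Yamanoi's second main theorem applied to the family $\{\zeta_j\}$ yields $\overline{N}(r,\zeta_j,f)/T(r,f)\to 1/\mu_j$ outside an exceptional set, whence $N(r,\zeta_j,f)-\mu_j\overline{N}(r,\zeta_j,f)=o(T(r,f))$ there. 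Only after this quantitative step can the comparison be closed, and since the exceptional zeros are now density-zero rather than finite, the closing argument must be run at the level of counting functions: for instance, if $\gamma_i$ is not completely ramified and $t_i>m$, its mostly simple zeros inject a contribution of size $(t_i-m)(1+o(1))T(r,f)$ into $\sum_{\omega}\bigl[N(r,\omega\gamma,\overline{f})-m\overline{N}(r,\omega\gamma,\overline{f})\bigr]=o(T(r,f))$, a contradiction. The same caveat applies to your use of $\Theta(b,f)=0$ for $b\notin\{\zeta_i\}$: being a limsup statement, it only gives simplicity of the $b$-points along a sequence of radii; to get it outside an exceptional set one should instead apply the second main theorem to the enlarged family $\{\zeta_j\}\cup\{b\}$.
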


With the above five lemmas, we are ready to derive the eleven equations \eqref{yanagiharaeq11 co}--\eqref{intro_eq_list2_6} from \eqref{yanagiharaeq0}. We make two remarks. First, when $p=q=n=3$, if $P(z,f)$ has only one root $\alpha$ and $Q(z,f)$ has three distinct roots $\beta_1$, $\beta_2$ and $\beta_3$, then by Lemma~\ref{keylemma  dis3} and the inequality \eqref{multiplicityinequality} we see that none of $\beta_j$ is zero, since for otherwise $f$ would have at least four completely ramified rational functions with multiplicity~3, which is impossible. Then the transformation $f\to1/f$ leads \eqref{yanagiharaeq0} to the case that $p=n=3$, $q=0$ when $\alpha\equiv0$ or to the case that $p=q=n=3$ and $Q(z,f)$ has only one root when $\alpha\not\equiv0$. Second, when $p=q=n=2$, if $P(z,f)$ has only one root $\alpha$ and $Q(z,f)$ has two distinct nonzero roots $\beta_1$ and $\beta_2$, then the transformation $f\to1/f$ leads \eqref{yanagiharaeq0} to the case that $p=n=2$, $q=0$ when $\alpha\equiv0$ or to the case that $p=q=n=2$ and $Q(z,f)$ has only one root when $\alpha\not\equiv0$. On the other hand, if one of the two roots $\beta_1$ and $\beta_2$ is zero, then the transformation $f\to 1/f$ leads \eqref{yanagiharaeq0} to the case that $p=1$, $q=2$ and $Q(z,f)$ has only one root. Therefore, by Lemmas~\ref{basiclemma  dis1}, \ref{basiclemma0  dis2} and~\ref{keylemma  dis4}, we see that we only need to consider the following six cases of \eqref{yanagiharaeq0}:
\begin{itemize}
  \item [(1)] $p=n=3$, $q=0$ and $P(z,f)$ has three distinct non-zero roots $\alpha_1$, $\alpha_2$ and $\alpha_3$;
  \item [(2)] $p=q=n=3$, $P(z,f)$ has three distinct non-zero roots $\alpha_1$, $\alpha_2$ and $\alpha_3$ and $Q(z,f)$ has only one root $\beta$;
  \item [(3)] $p=n=2$, $q=0$ and $P(z,f)$ has two distinct non-zero roots $\alpha_1$ and $\alpha_2$;
  \item [(4)] $p=q=n=2$, $P(z,f)$ has two distinct non-zero roots $\alpha_1$ and $\alpha_2$ and $Q(z,f)$ has only one root $\beta$;
  \item [(5)] $p=1$, $q=n=2$, $P(z,f)$ has only one non-zero root $\alpha$ and $Q(z,f)$ has only one root $\beta$;
  \item [(6)] $p=q=n=2$, $P(z,f)$ has two distinct non-zero roots $\alpha_1$ and $\alpha_2$ and $Q(z,f)$ has two distinct roots $\beta_1$ and $\beta_2$.
\end{itemize}
For each of the above six cases, we shall use Lemma~\ref{keylemma1  dis4  diseq20} together with Theorem~\ref{completelyrm} or the inequality~\eqref{multiplicityinequality} to consider \eqref{keylemma equa0  diseq18} and \eqref{keylemma equa1  diseq19} as in \cite{zhangkorhonen2022} to determine the coefficients of $R(z,f)$ of \eqref{yanagiharaeq0}, which then yield the~$11$ equations \eqref{yanagiharaeq11 co}--\eqref{intro_eq_list2_6} after doing a bilinear transformation $f\to\alpha f$ with a suitable algebraic function $\alpha$. Below we apply this strategy to each of the above six cases respectively.

Consider first case (1). By Lemma~\ref{keylemma  dis3}, for each $\alpha_i$, $\omega\alpha_i$ is a completely ramified rational function of $f$ with multiplicity $3$ for any $\omega$ such that $\omega^3=1$. By the inequality \eqref{multiplicityinequality} we may suppose that $\alpha_2=\omega \alpha_1$ and $\alpha_3=\omega^2\alpha_1$ for an $\omega$ such that $\omega^2+\omega+1=0$. Thus, by doing a linear transformation $f\to\alpha_1 f$, we may rewrite equation \eqref{first_order_de_n} as
\begin{equation}\label{yanagiharaeq0    bn1}
\overline{f}^3=c(1-f^3),
\end{equation}
where $c=-a_p\alpha_1^3/\overline{\alpha}_1^3$ is a rational function. By \eqref{yanagiharaeq0    bn1}, we consider
\begin{equation}\label{yanagiharaeq0    bn1 fu}
\overline{f}^3-1=c(1-f^3)-1=-c\left(f^3-\frac{c-1}{c}\right).
\end{equation}
Note that $f$ now has three distinct completely ramified rational functions $1,\omega,\omega^2$ with multiplicity~3 and has no other completely ramified rational functions. By Lemma~\ref{keylemma1  dis4  diseq20}, we must have $c-1=0$. This gives the equation \eqref{intro_eq_list2_5}.

Consider next case (2). By the same arguments as in case (1), we may suppose that $\alpha_2=\omega\alpha_1$ and $\alpha_3=\omega^2\alpha_1$ for an $\omega$ such that $\omega^2+\omega+1=0$. Thus, by doing a linear transformation $f\to\alpha_1 f$, we may rewrite equation \eqref{first_order_de_n} as
\begin{equation}\label{yanagiharaeq0    bn3}
\overline{f}^3=\frac{c(f^{3}-1)}{(f-\delta)^3},
\end{equation}
where $c=a_p/\overline{\alpha}_1^3$ and $\delta=\beta/\alpha_1$ are in general algebraic functions. By \eqref{yanagiharaeq0    bn3}, we consider
\begin{equation}\label{yanagiharaeq0    bn3 fu}
\overline{f}^3-1=\frac{c(f^{3}-1)}{(f-\delta)^3}-1=\frac{c(f^{3}-1)-(f-\delta)^3}{(f-\delta)^3}.
\end{equation}
Note that $f$ now has three distinct completely ramified rational functions $1,\omega,\omega^2$ with multiplicity~3 and has no other completely ramified rational functions. If $c\not=1$, then by Lemma~\ref{keylemma1  dis4  diseq20} we must have $c(f^{3}-1)-(f-\delta)^3=(c-1)(f-\gamma)^3$ for some algebraic function $\gamma$ distinct from $\delta$. However, a simple comparison on the terms of degrees $1$ and $2$ on both sides of this equation yields $\delta=\gamma$, a contradiction. If $c=1$, since the terms of degree $3$ cancel out, then by Lemma~\ref{keylemma1  dis4  diseq20} we must have $\delta=0$ so that $c(f^{3}-1)-(f-\delta)^3$ reduces to be an algebraic function. This gives the equation \eqref{yanagiharaeq15 co}.

Consider next case (3). We claim that $\alpha_1+\alpha_2=0$. Otherwise, by Lemma~\ref{keylemma  dis3}, $f$ has four completely ramified rational functions with multiplicities~2, namely $\pm \alpha_1$ and $\pm\alpha_2$. Now we consider
\begin{equation}\label{yanagiharaeq0    bn3 fu4}
\overline{f}^2-\overline{\alpha}_1^2= a_p(f-\alpha_1)(f-\alpha_2)-\overline{\alpha}_1^2.
\end{equation}
By Lemma~\ref{keylemma1  dis4  diseq20}, if some root of the polynomial $a_p(f-\alpha_1)(f-\alpha_2)-\overline{\alpha}_1^2$ is not equal to $-\alpha_1$ or $-\alpha_2$, then this root has order two. This implies that $-\alpha_1$ and $-\alpha_2$ are either both simple roots of the polynomial $a_p(f-\alpha_1)(f-\alpha_2)-\overline{\alpha}_1^2$, or neither of them are. Note that $a_p(f-\alpha_1)(f-\alpha_2)-\overline{\alpha}_i^2$ cannot be a square of some polynomial in $f$ for both $i=1,2$. By considering $\overline{f}^2-\overline{\alpha}_2^2$ again, then, in the first case we conclude by Lemma~\ref{keylemma1  dis4  diseq20} that the polynomial $a_p(f-\alpha_1)(f-\alpha_2)-\overline{\alpha}_2^2$ is a square of some polynomial in $f$ and in the latter case we conclude by Lemma~\ref{keylemma1  dis4  diseq20} that $-\alpha_1$ and $-\alpha_2$ are both simple roots of the polynomial $a_p(f-\alpha_1)(f-\alpha_2)-\overline{\alpha}_2^2$. We only need to consider the first case. Now, by doing a linear transformation $f\to\alpha_1 f$, we have the system of two equations:
\begin{equation}\label{yanagiharaeq0    bn3 fu5}
\begin{split}
c(f-1)(f-\kappa)-1&= c(f+1)(f+\kappa),\\
c(f-1)(f-\kappa)-\overline{\kappa}^2&= c(f-\delta)^2,
\end{split}
\end{equation}
where $c=a_p\alpha_1^2/\overline{\alpha}_1^2$, $\kappa=\alpha_2/\alpha_1$ and $\delta$ are in general algebraic functions. However, by comparing the coefficients on both sides of the two equations in \eqref{yanagiharaeq0    bn3 fu5}, we deduce from the resulting coefficient relations that $1+\kappa=0$, a contradiction. Therefore, $\alpha_1+\alpha_2=0$. Then, by doing a linear transformation $f\to\alpha_1 f$, we may rewrite equation \eqref{first_order_de_n} as
\begin{equation}\label{yanagiharaeq0    bn4}
\overline{f}^2=c(1-f^2),
\end{equation}
where $c=-a_p\alpha_1^2/\overline{\alpha}_1^2$ is a rational function. By \eqref{yanagiharaeq0    bn4}, we consider
\begin{equation}\label{yanagiharaeq0    bn4  fu1}
\overline{f}^2-1=c(1-f^2)-1=-c\left(f^2-\frac{c-1}{c}\right).
\end{equation}
Note that $f$ now has two distinct completely ramified rational functions $\pm1$. If $c-1=0$, then $c=1$ and we get the equation \eqref{yanagiharaeq11 co}. Otherwise, $c-1\not=0$, then by Lemma~\ref{keylemma1  dis4  diseq20} we see that $\pm \sqrt{(c-1)/c}$ are both completely ramified rational functions of $f$. Again, we consider
\begin{equation}\label{yanagiharaeq0    bn4  fu2}
\overline{f}^2-\frac{\overline{c}-1}{\overline{c}}=c(1-f^2)-\frac{\overline{c}-1}{\overline{c}}=-c\left(f^2-\frac{\overline{c}c-\overline{c}+1}{\overline{c}c}\right).
\end{equation}
Since now $f$ has four completely ramified rational functions $\pm1,\pm\sqrt{(c-1/)c}$, then by Lemma~\ref{keylemma1  dis4  diseq20} we must have $\overline{c}c-\overline{c}+1=0$, i.e., $c=-\omega^2$, where $\omega$ is a constant such that $\omega^2+\omega+1=0$. This gives the equation \eqref{intro_eq_list2_1}.

Consider next case (4). In this case, if $\alpha_1+\alpha_2\not=0$, then by Lemma~\ref{keylemma  dis3}, $f$ has four completely ramified rational functions with multiplicities~2, namely $\pm \alpha_1$ and $\pm\alpha_2$. Suppose first that $a_p\not=\overline{\alpha}_1^2,\overline{\alpha}_2^2$. We consider
\begin{equation}\label{yanagiharaeq0    bn5}
\overline{f}^2-\overline{\alpha}_1^2= \frac{a_p(f-\alpha_1)(f-\alpha_2)-\overline{\alpha}_1^2(f-\beta)^2}{(f-\beta)^2}.
\end{equation}
By the same arguments as in case (3), $-\alpha_1$ and $-\alpha_2$ are either both simple roots of the polynomial $a_p(f-\alpha_1)(f-\alpha_2)-\overline{\alpha}_1^2(f-\beta)^2$, or neither of them are. If $a_p(f-\alpha_1)(f-\alpha_2)-\overline{\alpha}_i^2(f-\beta)^2$ is a square of some polynomial in $f$ for both $i=1,2$, then by computing the two discriminants $\Delta_i:=[a_p(\alpha_1+\alpha_2)-2\overline{\alpha}_i^2\beta]^2-4(a_p-\overline{\alpha}_i^2)(a_p\alpha_1\alpha_2-\overline{\alpha}_i^2\beta^2)=0$, $i=1,2$, we deduce that $(\beta-\alpha_1)(\beta-\alpha_2)=0$, which is impossible. By considering $\overline{f}^2-\overline{\alpha}_2^2$ again, then, in the first case we conclude by Lemma~\ref{keylemma1  dis4  diseq20} that the polynomial $a_p(f-\alpha_1)(f-\alpha_2)-\overline{\alpha}_2^2(f-\beta)^2$ is a square of some polynomial in $f$ and in the latter case we conclude by Lemma~\ref{keylemma1  dis4  diseq20} that $-\alpha_1$ and $-\alpha_2$ are both simple roots of the polynomial $a_p(f-\alpha_1)(f-\alpha_2)-\overline{\alpha}_2^2(f-\beta)^2$. We only need to consider the first case. Now, by doing a linear transformation $f\to\alpha_1 f$, we have the system of two equations:
\begin{equation}\label{yanagiharaeq0    bn5  fu1}
\begin{split}
c(f-1)(f-\kappa)-(f-\delta)^2&= (c-1)(f+1)(f+\kappa),\\
c(f-1)(f-\kappa)-\overline{\kappa}^2(f-\delta)^2&= (c-\overline{\kappa}^2)(f-\gamma)^2,
\end{split}
\end{equation}
where $c=a_p/\overline{\alpha}_1^2$, $\kappa=\alpha_2/\alpha_1$, $\delta=\beta/\alpha_1$ and $\gamma$ are in general algebraic functions. By comparing the coefficients on both sides of the two equations in \eqref{yanagiharaeq0    bn5  fu1}, we deduce from the resulting coefficient relations that $\kappa=\delta^2$, $\gamma=-\delta$, $c=\frac{1}{2}\frac{(1+\delta)^2}{1+\delta^2}$ and $\delta\not=0,\pm1,\pm i$ satisfies  $8\overline{\delta}^4(\delta^2+1)\delta=(\delta+1)^4$. Note that $\delta\equiv1$ solves this equation. We see that $\delta$ is a constant and thus $\alpha_1$ and $\alpha_2$ are both rational functions. This gives the equation \eqref{intro_eq_list2_6}. Now, if $a_p=\overline{\alpha}_1^2$ or $a_p=\overline{\alpha}_2^2$, then by similar discussions as above, we have the system of two equations:
\begin{equation}\label{yanagiharaeq0    bn5  fu1f1}
\begin{split}
(f-1)(f-\kappa)-(f-\delta)^2&= c_1,\\
(f-1)(f-\kappa)-\overline{\kappa}^2(f-\delta)^2&= (1-\overline{\kappa}^2)(f-\gamma)^2,
\end{split}
\end{equation}
or
\begin{equation}\label{yanagiharaeq0    bn5  fu1f2}
\begin{split}
\overline{\kappa}^2(f-1)(f-\kappa)-(f-\delta)^2&= (\overline{\kappa}^2-1)(f+1)(f+\kappa),\\
\overline{\kappa}^2(f-1)(f-\kappa)-\overline{\kappa}^2(f-\delta)^2&= c_2,
\end{split}
\end{equation}
where $\delta=\beta/\alpha_1$, $c_1$, $c_2$ and $\gamma$ are in general algebraic functions. However, by comparing the coefficients on both sides, we deduce from the resulting coefficient relations obtained from the system of two equations in \eqref{yanagiharaeq0    bn5  fu1f1} that $\delta=\gamma$ and the resulting coefficient relations obtained from the system of two equations in \eqref{yanagiharaeq0    bn5  fu1f2} that $1+\kappa=0$, both of which are impossible. On the other hand, if $\alpha_1+\alpha_2=0$, then by doing a linear transformation $f\to\alpha_1 f$, we may rewrite equation \eqref{first_order_de_n} as
\begin{equation}\label{yanagiharaeq0    bn5  fu2}
\overline{f}^2=\frac{c(1-f^2)}{(f-\delta)^2},
\end{equation}
where $c=-a_p/\overline{\alpha}_1^2$ and $\delta=\beta/\alpha_1$ are in general algebraic functions. By \eqref{yanagiharaeq0    bn5  fu2}, we consider
\begin{equation}\label{yanagiharaeq0    bn5  fu3}
\overline{f}^2-1=\frac{c(1-f^2)}{(f-\delta)^2}-1=\frac{c(1-f^2)-(f-\delta)^2}{(f-\delta)^2}.
\end{equation}
When $c=-1$, if $\delta\not=0$, then by Lemma~\ref{keylemma1  dis4  diseq20} we see that $2\delta/(\delta^2+1)$ and $\infty$ are both completely ramified rational functions of $f$. However, by Lemma~\ref{keylemma  dis3} $f$ would have $5$ completely ramified rational functions, a contradiction to Theorem~\ref{completelyrm}. Therefore, we must have $\delta=0$ when $c=-1$. When $c\not=-1$, if $c(1-f^2)-(f-\delta)^2$ has only one root, i.e., the discriminant $\Delta:=4\delta^2+4(c+1)(c-\delta^2)=0$, then $c=\delta^2-1$. The above two cases give the equation \eqref{yanagiharaeq12 co}. Otherwise, we have $c\not=-1$ and $c(1-f^2)-(f-\delta)^2$ has two distinct roots, say $\delta_1$ and $\delta_2$, which are both completely ramified rational functions of $f$ by Lemma~\ref{keylemma1  dis4  diseq20}. By Lemma~\ref{keylemma  dis3}, $\pm \delta_1$ and $\pm \delta_2$ are all completely ramified rational functions of $f$. By Theorem~\ref{completelyrm} we must have $\delta_1+\delta_2=0$. It follows that $\delta=0$ and $\delta_1^2=c/(c+1)$. Again, we consider
\begin{equation}\label{yanagiharaeq0    bn5  fu4}
\overline{f}^2-\frac{\overline{c}}{\overline{c}+1}=\frac{c(1-f^2)}{f^2}-\frac{\overline{c}}{\overline{c}+1}=\frac{c-[(\overline{c}c+c+\overline{c})/(\overline{c}+1)]f^2}{f^2}.
\end{equation}
If $\overline{c}c+c+\overline{c}\not=0$, then by Lemma~\ref{keylemma1  dis4  diseq20} we see that $\pm \sqrt{c(\overline{c}+1)/(\overline{c}c+c+\overline{c})}$ are both completely ramified rational functions of $f$, a contradiction to Theorem~\ref{completelyrm}. Therefore, we must have $\overline{c}c+c+\overline{c}=0$ and thus $c=-2$. This gives the equation \eqref{intro_eq_list2_2}.

Consider next case (5). In this case, by doing a linear transformation $f\to -\alpha f$, we may rewrite equation \eqref{first_order_de_n} as
\begin{equation}\label{yanagiharaeq0    bn6}
\overline{f}^2=\frac{c(f+1)}{(f-\delta)^2},
\end{equation}
where $c=-a_p/\alpha\overline{\alpha}^2$ and $\delta=-\beta/\alpha$ are rational functions. Then from previous discussions and Lemmas~\ref{basiclemma  dis1} and \ref{keylemma  dis3}, we see that $\infty$, $\pm1$ and $\pm \delta$ are all completely ramified rational functions of $f$. By Theorem~\ref{completelyrm}, we must have $\delta=1$. By \eqref{yanagiharaeq0    bn6}, we consider
\begin{equation}\label{yanagiharaeq0    bn6  fu1}
\overline{f}^2-1=\frac{c(f+1)}{(f-1)^2}-1=\frac{c(f+1)-(f-1)^2}{(f-1)^2}.
\end{equation}
If $c(f+1)-(f-1)^2$ has two distinct roots, then by Lemma~\ref{keylemma1  dis4  diseq20} these two roots are both completely ramified rational functions, a contradiction to Theorem~\ref{completelyrm}. Thus $c(f+1)-(f-1)^2$ can have only one root, which implies that the discriminant $\Delta:=(c+2)^2+4(c-1)=0$, i.e., $c=-8$. This gives the equation \eqref{yanagiharaeq13 co}.

Consider finally case (6). By Lemma~\ref{basiclemma  dis1}, $\pm\alpha_1$, $\pm\alpha_2$, $\pm\beta_1$ and $\pm\beta_2$ are all completely ramified rational functions of $f$. By Theorem~\ref{completelyrm}, we must have either $\alpha_1+\alpha_2=0$ and $\beta_1+\beta_2=0$ or $\alpha_1+\beta_1=0$ and $\alpha_2+\beta_2=0$. When $\alpha_1+\alpha_2=0$ and $\beta_1+\beta_2=0$, by doing a linear transformation $f\to\beta_1 f$, we may rewrite equation \eqref{first_order_de_n} as
\begin{equation}\label{yanagiharaeq0    bn7}
\overline{f}^2=\frac{c(f^2-\kappa^2)}{f^2-1},
\end{equation}
where $c=a_p/\overline{\beta}_1^2$ and $\kappa=\alpha_1/\beta_1$ are in general algebraic functions. We see that $\kappa^2\not=0,1$. By \eqref{yanagiharaeq0    bn6}, we consider
\begin{equation}\label{yanagiharaeq0    bn7  fu1}
\overline{f}^2-1=\frac{c(f^2-\kappa^2)}{f^2-1}-1=\frac{(c-1)f^2-(c\kappa^2-1)}{f^2-1}.
\end{equation}
If $c\not=1$ and $c\not=1/\kappa^2$, then by Lemma~\ref{keylemma1  dis4  diseq20}, $\pm \sqrt{(c\kappa^2-1)/(c-1)}$ are both completely ramified rational functions of $f$ and thus $f$ would have 6 completely ramified rational functions, a contradiction to Theorem~\ref{completelyrm}. Therefore, $c=1$ or $c=1/\kappa^2$. If $c=1$, then we get the equation \eqref{yanagiharaeq14 co}; if $c=1/\kappa^2$, then we consider the equation $\overline{f}^2-\overline{\kappa}^2$ and by the same arguments as above to obtain that $1/\kappa^2=\overline{\kappa}^2$, i.e., $\kappa^2=-1$ and $c=-1$ and thus we get the equation \eqref{intro_eq_list2_3}. On the other hand, when $\alpha_1+\beta_1=0$ and $\alpha_2+\beta_2=0$, by doing a linear transformation $f\to \sqrt{\alpha_1\alpha_2} f$, we may rewrite equation \eqref{first_order_de_n} as
\begin{equation}\label{yanagiharaeq0    bn8}
\overline{f}^2=c\frac{(f-\delta)(f-\delta^{-1})}{(f+\delta)(f+\delta^{-1})},
\end{equation}
where $c=a_p/\overline{\alpha}_1\overline{\alpha}_2$ and $\delta=\sqrt{\alpha_1/\alpha_2}$ are in general algebraic functions. By the same arguments as for the equation \eqref{yanagiharaeq0    bn5}, we may consider $\overline{f}^2-\overline{\delta}^2$ and conclude by Lemma~\ref{keylemma1  dis4  diseq20} that the polynomial $c(f-\delta)(f-\delta^{-1})-\overline{\delta}^2(f+\delta)(f+\delta^{-1})$ is a square of some polynomial in $f$ when $c\not=\overline{\delta}^2$ or reduces to be an algebraic function $c_1$ when $c=\overline{\delta}^2$. Since $\delta^2\not=0,\pm1, \pm i$, a straightforward comparison shows that the latter case is impossible. Similarly, we may consider $\overline{f}^2-\overline{\delta}^{-2}$ and conclude that the polynomial $c(f-\delta)(f-\delta^{-1})-\overline{\delta}^{-2}(f+\delta)(f+\delta^{-1})$ is a square of some polynomial in $f$. Thus we have the system of two equations:
\begin{equation}\label{yanagiharaeq0    bn8   fu1}
\begin{split}
c(f-\delta)(f-\delta^{-1})-\overline{\delta}^2(f+\delta)(f+\delta^{-1})&= (c-\overline{\delta}^2)(f-\gamma_1)^2,\\
c(f-\delta)(f-\delta^{-1})-\overline{\delta}^{-2}(f+\delta)(f+\delta^{-1})&= (c-\overline{\delta}^{-2})(f-\gamma_2)^2,
\end{split}
\end{equation}
where $\gamma_1$ and $\gamma_2$ are in general algebraic functions. By comparing the coefficients on both sides of the two equations in \eqref{yanagiharaeq0    bn8   fu1}, we deduce from the resulting coefficient relations that $\gamma_1^2=\gamma_2^2=c^2=1$ and $d=\delta+\delta^{-1}$ satisfies  $\overline{d}^2(d^2-4)=2(1-c)d^2-8(1+c)$. We see that $d$ is a constant. This gives the equation \eqref{intro_eq_list2_4} and also completes the classification for equation \eqref{yanagiharaeq0}.

\subsection{Growth of meromorphic solutions of equation \eqref{intro_eq_list2_6}}
We show that all transcendental meromorphic solutions $f$ of equation \eqref{intro_eq_list2_6} have hyper-order $\geq 1$. Note that $f$ is twofold ramified over each of $\pm1,\pm \delta^2$. Then there exists an entire function $\varphi(z)$ such that $f$ is written as $f(z)=\text{sn}(\varphi(z))$, where $\text{sn}(\varphi)=\text{sn}(\varphi,1/\delta^2)$ is the Jacobi elliptic function with the modulus $1/\delta^2$ and satisfies the second order differential equation $\text{sn}'(\varphi)^2=(1-\text{sn}(\varphi)^2)(1-\text{sn}(\varphi)^2/\delta^4)$. Moreover, by the second main theorem we have $T(r,f)=N(r,1/(f-1))+O(\log r)$. Let $z_0$ be a point such that $f(z_0)=\text{sn}(\varphi(z_0))=1$. It follows from \eqref{intro_eq_list2_6} that $f(z_0+1)=\text{sn}(\varphi(z_0+1))=0$. Computing the Maclaurin series for $\text{sn}(\varphi)$ and $\text{sn}(\overline{\varphi})$ around the point $z_0$, respectively, we get
    \begin{equation}\label{Poiu1}
    \begin{split}
    \text{sn}(\varphi(z_0))=1-\frac{\delta^4-1}{\delta^4}(\varphi(z)-\varphi(z_0))+\cdots=1-\frac{\delta^4-1}{\delta^4}\varphi'(z_0)(z-z_0)+\cdots,
    \end{split}
    \end{equation}
and
    \begin{equation}\label{Poiu2}
    \begin{split}
   \text{sn}(\varphi(z_0+1))=\varphi(z+1)-\varphi(z_0+1)+\cdots=\varphi'(z_0+1)(z-z_0)+\cdots.
    \end{split}
    \end{equation}
By substituting the above two expressions into \eqref{intro_eq_list2_6} and then comparing the second-degree terms on both sides of the resulting equation, we find
    \begin{equation}\label{Poiu3}
    \varphi'(z_0+1)^2 = \frac{1}{2}\frac{(1+\delta)^4}{\delta^4}\varphi'(z_0)^2.
    \end{equation}
A simple computation together with equation \eqref{Poiu} shows that $(1+\delta)^4\not=2\delta^4$. Define $G(z):=\varphi'(z+1)^2- \frac{1}{2}\frac{(1+\delta)^4}{\delta^4}\varphi'(z)^2$. From the discussions in \cite{Korhonenzhang2020} we know that $T(r,\overline{f})=T(r,f)+O(\log r)$. Since $\text{sn}(z)$ has positive order of growth, then by \cite[p.~50]{Hayman1964Meromorphic} we have $T(r,\varphi)=o(T(r,f))$ and $T(r,\overline{\varphi})=o(T(r,\overline{f}))$, where $r\to\infty$. If $G\not\equiv0$, then $T(r,G)\leq o(T(r,f))$, $r\to\infty$, which is impossible since $G$ has  $T(r,f)+O(\log r)$ many zeros. Thus $G(z)\equiv0$. Now, $\varphi'(z+1) =\pm \frac{1}{\sqrt{2}}\frac{(1+\delta)^2}{\sigma^2}\varphi'(z)$ and by integration we see that $\varphi$ is an entire function such that $T(r,\varphi)\geq Kr$ for some $K>0$ and all $r\geq r_0$ with some $r_0\geq 0$.
Since $\text{sn}(z)$ has positive exponent of convergence of zeros and $f(z)=\text{sn}(\varphi(z))$, then the fact that $f$ is of hyper-order at least one is a consequence of Lemma~\ref{hyper-order lemma} below, which is a slightly modified version of \cite[Lemma~5.20]{Laine1993}.

\begin{lemma}\label{hyper-order lemma}
Let $g$ be a meromorphic function such that the exponent of convergence $\lambda=\lambda(g)>0$, and let $\varphi=\varphi(z)$ be an entire function such that  $T(r,\varphi)\geq cr$ for some $c>0$ and all $r\geq r_0$ with some $r_0\geq 0$. Then the hyper order of $g\circ\varphi$ is at least one.
\end{lemma}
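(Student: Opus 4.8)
The plan is to reduce the statement to an exponential lower bound for the Nevanlinna characteristic of $F:=g\circ\varphi$ along a sequence of radii. Indeed, if one can produce $r_k\to\infty$ and a constant $d>0$ with $T(r_k,F)\ge c'e^{dr_k}$ for some $c'>0$, then
\[
\frac{\log\log T(r_k,F)}{\log r_k}\ \ge\ \frac{\log(dr_k)+O(1)}{\log r_k}\longrightarrow 1,
\]
so the hyper-order of $F$ is at least one. The whole argument is therefore a lower bound for $T(r,F)$, and since $T(r,F)\ge N(r,1/F)+O(1)$ by the first main theorem it suffices to count zeros of $F$. The basic observation is that $F(z_0)=0$ exactly when $\varphi(z_0)$ is a zero of $g$; hence every $\varphi$-preimage in $|z|\le r$ of a zero of $g$ is a zero of $F$ in $|z|\le r$, and preimages of distinct zeros of $g$ are distinct points.

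First I would extract the two quantitative inputs. From the hypothesis $T(r,\varphi)\ge cr$ together with the elementary estimate $T(r,\varphi)\le\log^+M(r,\varphi)$, valid because $\varphi$ is entire, one gets $M(r,\varphi)\ge e^{cr}$ for all large $r$; in particular $\varphi$ is transcendental of order at least one. From $\lambda=\lambda(g)>0$, the definition of the exponent of convergence of the zeros yields, for any fixed $\beta$ with $0<\beta<\lambda$, a sequence $s_j\to\infty$ along which $g$ has at least $s_j^{\beta}$ zeros in $\{|w|\le s_j\}$.

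The heart of the proof is the bridge between these two facts, namely that $\varphi$ captures essentially all zeros of $g$ lying in the disc $\{|w|\le M(r,\varphi)\}$. Here I would invoke the covering properties of transcendental entire functions in the spirit of \cite[Lemma~5.20]{Laine1993}: by Wiman--Valiron theory, for $r$ outside an exceptional set of finite logarithmic measure, $\varphi$ covers a full annulus just inside the circle $|w|=M(r,\varphi)$ (all arguments, with multiplicity governed by the central index). Since a value once attained in some disc remains attained in all larger discs, letting $M(r,\varphi)$ sweep continuously up to $M(r_k,\varphi)$ shows that every zero $b$ of $g$ with $|b|\le M(r_k,\varphi)$, save the finitely many of very small modulus, is attained by $\varphi$ somewhere in $|z|\le r_k$. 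This gives
\[
n(r,1/F)\ \ge\ n\bigl(M(r,\varphi),1/g\bigr)-O(1).
\]
I expect this covering step to be the main obstacle: one must ensure that no region carrying many zeros of $g$ is missed, which is precisely why the full \emph{annular} covering is used rather than a pointwise ``all values outside a small exceptional set'' statement, and one must also check that the finitely many exceptional radii do not interrupt the sweep, which is harmless since they have finite logarithmic measure while the annuli have positive relative width.

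Finally I would align the two sequences. Choosing $r_k$ so that $M(r_k,\varphi)=s_k$ (possible by continuity and monotonicity of $M(\cdot,\varphi)$, after a harmless perturbation of $r_k$ to avoid the exceptional set), the displayed inequality together with the input on the zeros of $g$ gives $n(r_k,1/F)\ge s_k^{\beta}-O(1)$, while $s_k=M(r_k,\varphi)\ge e^{cr_k}$ forces $s_k^{\beta}\ge e^{\beta c r_k}$. Passing to the integrated counting function,
\[
N(2r_k,1/F)\ \ge\ \int_{r_k}^{2r_k}\frac{n(t,1/F)}{t}\,dt\ \ge\ (\log 2)\,n(r_k,1/F),
\]
and using $T(2r_k,F)\ge N(2r_k,1/F)+O(1)$, we obtain $T(2r_k,F)\ge c'e^{\beta c r_k}$ for large $k$ and a suitable $c'>0$. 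Writing $\rho_k=2r_k$, this is $T(\rho_k,F)\ge c'e^{(\beta c/2)\rho_k}$, which by the reduction in the first paragraph yields hyper-order at least one and completes the proof.
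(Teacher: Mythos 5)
Your proposal has the same architecture as the paper's proof: zeros of $g\circ\varphi$ are produced as $\varphi$-preimages of the zeros of $g$; the hypothesis on $\varphi$ enters through $\log M(r,\varphi)\geq T(r,\varphi)\geq cr$; the hypothesis $\lambda(g)>0$ supplies polynomially many zeros of $g$ in discs of radius $s$ along a sequence; and the conclusion follows from $T\geq N$ along a sequence of radii (your passage from $n$ to $N$ by integrating over $[r_k,2r_k]$ is in fact spelled out more carefully than in the paper). The only genuine divergence is how you justify the covering statement -- that $\varphi$ attains, in $|z|\leq t$, every value in an annulus reaching from a fixed $r_1$ out to essentially $M(t,\varphi)$ -- and that is precisely where your argument has a gap.

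The paper, following Laine's Lemma~5.20, asserts this covering with \emph{no exceptional set of radii}, on Picard-type grounds: there is a fixed $r_1$ such that for all large $t$ the annulus $r_1<|w|<M(t,\varphi)$ lies in $\varphi(\{|z|<t\})$. You instead invoke Wiman--Valiron theory, which covers an annulus near $|w|=M(r,\varphi)$ only for $r$ outside a set $E$ of finite logarithmic measure, and then dismiss $E$ as harmless ``since the annuli have positive relative width''. That inference fails quantitatively. The logarithmic width of the annulus covered via Wiman--Valiron at radius $r$ is of order $\nu(r)^{1/2-\epsilon}$, where $\nu$ is the central index, whereas across an exceptional interval $[a,b]\subset E$ the quantity $\log M(\cdot,\varphi)$ can increase by roughly $\nu(b)\log(b/a)$. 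Finite logarithmic measure controls $\sum\log(b/a)$ but places no bound on $\nu(b)\log(b/a)$; since $\varphi$ may well have infinite order (the intended application has $\varphi$ of at least order one, with no upper bound), $\nu$ can grow fast enough that the annuli covered on the two sides of an exceptional interval do not overlap, so the sweep can be interrupted and the zeros of $g$ with moduli in the missed range are not accounted for. The covering you need is true, but it must be imported as a classical theorem rather than re-derived this way: either in the exceptional-set-free form used by the paper (which itself rests on a covering theorem for entire functions rather than on Picard's theorem alone), or in the weaker P\'olya-type form that $\varphi(\{|z|\leq t\})$ contains an annulus $\{r_1<|w|<c_0M(t/2,\varphi)\}$ for an absolute constant $c_0>0$. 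The weakening from $M(t,\varphi)$ to $c_0M(t/2,\varphi)$ is immaterial to your estimates, since $\log\bigl(c_0M(r_k/2,\varphi)\bigr)\geq T(r_k/2,\varphi)+O(1)\geq cr_k/2+O(1)$ is still linear in $r_k$; with such a covering lemma in hand, the rest of your argument closes and agrees with the paper's.
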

\begin{proof}
We consider the zeros of $g\circ\varphi$. Since $\varphi$ is an entire function, then $\varphi$ has at most one finite Picard's exceptional value. Thus we may choose a constant $r_1\geq r_0$ such that $\varphi$ takes in $|z|<t$ every value $w$ in the annulus $r_1<|w|<M(t,\varphi)$, provided that $t$ is large enough. Let $g$ have $\mu(t)$ zeros in this annulus, counted according to their multiplicity. Then by the definition of $\lambda$, we have
    \begin{equation}\label{exponent of zeros}
    \limsup_{r\to\infty}\frac{\log n(r)}{\log r}=\limsup_{t\to\infty}\frac{\log \mu(t)}{\log M(t,\varphi)}=\lambda>0.
    \end{equation}
Hence, for some $\tau>0$, there exists a sequence $(t_n)$ tending to $+\infty$ such that
    \begin{equation}\label{exponent of zeros1}
    \mu(t_n)>\left(M(t_n,\varphi)\right)^{\tau}\geq \left(e^{ct_n}\right)^{\tau},
    \end{equation}
where $c$ is a positive constant. The second inequality above follows by our assumption since $\log M(t,\varphi)\geq T(t,\varphi)$ for all large $t$. Now, $g\circ\varphi$ has at least $\mu(t)$ zeros in $|z|<t$. Making using of \eqref{exponent of zeros1}, we have
    \begin{equation}\label{exponent of zeros2}
    \limsup_{r\to\infty}\frac{\log\log n(r,1/g\circ\varphi)}{\log r}\geq \limsup_{t_n\to\infty}\frac{\log\log \mu(t_n,1/g\circ\varphi)}{\log t_n}\geq 1.
    \end{equation}
By the fact that $T(r,1/g\circ\varphi)\geq N(r,1/g\circ\varphi)$, we conclude that the hyper order of $g\circ\varphi$ is at least one. Thus our assertion follows.
\end{proof}

We also note that the fact that all meromorphic solutions of each equation in the list \eqref{intro_eq_list2_1}--\eqref{intro_eq_list2_5} can be proved using the above method since all solutions of them are elliptic functions composed with entire functions.

\section{Relations between equations \eqref{first_order_di_n} and \eqref{first_order_de_n} when $n=2$}\label{Relations between Malmquist type differential and difference equations}

In this section, we use the bilinear method and the continuum limit method to study the relations between equations \eqref{first_order_di_n} and \eqref{first_order_de_n} for the case $n=2$. For the description of the bilinear method, see \cite{HirotaRI} or \cite{Hietarinta1996}. Here we provide a brief overview of the continuum limit method: Let $k$ be a positive integer, and $\varepsilon$ be a complex number. We set a pair of relations:
\begin{equation}\label{conlim1}
\begin{split}
\mu(z,t,\varepsilon)=0, \quad \nu(f(z),w(t,\varepsilon),\varepsilon)=0.
\end{split}
\end{equation}
According to this, we transform a difference equation
\begin{equation}\label{conlim2}
\Omega_0(z,f(z+1),\cdots,f(z+k))=0
\end{equation}
to a certain difference equation
\begin{equation}\label{conlim3}
\Omega_1(t,w(t,\varepsilon),\cdots,w(t+k\varepsilon,\varepsilon))=0.
\end{equation}
Letting $\varepsilon\to0$, with some conditions on coefficients of $\Omega_1$, we derive a differential equation:
\begin{equation}\label{conlim4}
\Omega_1(t,w'(t,0),\cdots,w^{(k)}(t,0))=0.
\end{equation}
It is clear that the first order linear difference equation has a continuum limit to the first order linear differential equation in the autonomous case. In the two subsections below, we describe the relations between the difference equations \eqref{yanagiharaeq11 co}--\eqref{yanagiharaeq14 co} and the differential equations \eqref{riccati}, \eqref{MYS1} or \eqref{MYS2}. We also show how to take the continuum limit for the five equations \eqref{intro_eq_list2_1}--\eqref{intro_eq_list2_4} and \eqref{intro_eq_list2_6}.

\subsection{Relations between \eqref{yanagiharaeq12 co} and \eqref{yanagiharaeq13 co} and the Riccati equation \eqref{riccati}}
In \cite{Ishizaki2017}, Ishizaki discussed the relation between a differential Riccati equation and a difference Riccati equation. We first recall Ishizaki's results below. For the differential Riccati equation \eqref{riccati}, we assume that $a_2\not\equiv0$ from now on. It is elementary to show that a suitable linear transformation on $f$ leads equation \eqref{riccati} to
\begin{equation}\label{riccati  conlim1}
f'=f^2+A(z),
\end{equation}
where $A(z)$ is a rational function formulated in terms of $a_j$ and their derivatives; see \cite[chapter~9]{Laine1993}. Ishizaki used the bilinear method to derive a difference Riccati equation from \eqref{riccati  conlim1} in the following way: Setting $f(z)=u(z)/v(z)$, then equation \eqref{riccati  conlim1} becomes
\begin{equation}\label{riccati  conlim2fu1}
\begin{split}
u'(z)v(z)-u(z)v'(z) = u(z)^2+A(z)v(z)^2,
\end{split}
\end{equation}
which is gauge invariant. In other words, for any $h(z)$, $\tilde{u}(z)=u(z)h(z)$ and $\tilde{v}(z)=v(z)h(z)$ also satisfy the differential equation \eqref{riccati  conlim2fu1} in place of $u(z)$ and $v(z)$, respectively. Corresponding to equation \eqref{riccati  conlim2fu1}, we choose a difference equation
\begin{equation}\label{riccati  conlim2fu2}
\begin{split}
u(z+1)v(z)-u(z)v(z+1)= u(z)u(z+1)+A(z)v(z)v(z+1),
\end{split}
\end{equation}
having the property of being gauge invariant. Setting $f(z)=u(z)/v(z)$ in the difference equation above, then we obtain
\begin{equation}\label{riccati  conlim2fu3}
\begin{split}
f(z+1)-f(z) = f(z+1)f(z)+A(z),
\end{split}
\end{equation}
i.e.,
\begin{equation}\label{riccati  conlim2fu4}
\begin{split}
f(z+1)=\frac{f(z)+A(z)}{1-f(z)}.
\end{split}
\end{equation}
On the other hand, for the difference Riccati equation \eqref{driccati0}, Ishizaki showed that if $b_1(z)\not=-b_3(z+1)$, by doing the transformation $f(z)\to [(-b_3-\underline{b}_1)f+(-b_3+\underline{b}_1)]/2$ we obtain the difference equation \eqref{riccati  conlim2fu4} with
\begin{equation}\label{riccati  conlim6}
A(z)=\frac{-4b_2-b_1\underline{b}_1+3b_1b_3-\underline{b}_1\overline{b}_3-b_3\overline{b}_3}{(b_3+\underline{b}_1)(\overline{b}_3+b_1)}.
\end{equation}
Set
\begin{equation}\label{riccati  conlim8}
\begin{split}
t=\varepsilon z,\quad f=\varepsilon w(t,\varepsilon),
\end{split}
\end{equation}
with the condition $A(z)=\varepsilon^2 \tilde{A}(t,\varepsilon)$ and $\lim_{\varepsilon\to0}\tilde{A}(t,\varepsilon)=\tilde{A}(t,0)$. Since $f(z+1)=\varepsilon w(\varepsilon (z+1),\varepsilon)=\varepsilon w(t+\varepsilon,\varepsilon)$, we have
\begin{equation}\label{riccati  conlim9}
w(t+\varepsilon,\varepsilon)-w(t,\varepsilon)=\varepsilon w(t+\varepsilon,\varepsilon)w(t,\varepsilon)+\varepsilon \tilde{A}(t,\varepsilon).
\end{equation}
By letting $\varepsilon\to 0$, we have
\begin{equation}\label{riccati  conlim10}
w'(t,0)=w(t,0)^2+\tilde{A}(t,0),
\end{equation}
which is equation \eqref{riccati  conlim1}. In particular, if $A$ is a constant, then we replace $A$ with $\varepsilon^2\tilde{A}$ with a constant $\tilde{A}$.

With the introduction above, let's look at the two equations \eqref{yanagiharaeq12 co} and \eqref{yanagiharaeq13 co}, respectively. For equation \eqref{yanagiharaeq12 co}, if we put $f=(\gamma+\gamma^{-1})/2$, then it follows that
    \begin{equation*}
    \overline{\gamma}^2\pm2i\frac{\delta\gamma^2-2\gamma+\delta}{\gamma^2-2\delta\gamma+1}\overline{\gamma}-1=0.
    \end{equation*}
Solving the equation above, we get four different difference Riccati equations:
    \begin{equation*}
    \overline{\gamma}=\left\{-\theta\frac{(\pm i\delta-\sqrt{1-\delta^2})\gamma\pm i}{\gamma-\delta\pm i\sqrt{1-\delta^2}}\right\}^{\theta}, \quad \theta=\pm1.
    \end{equation*}
It is easy to see that these four difference equations do not have any common solutions. Take the following difference Riccati equation as an example:
    \begin{equation}\label{riccati  conlim11fua1fat}
    \overline{\gamma}=\frac{(-i\delta+\sqrt{1-\delta^2})\gamma-i}{\gamma+(-\delta+i\sqrt{1-\delta^2})},
    \end{equation}
consider the case where $\delta$ is a constant. If $2\delta^2\not=1$, then by doing the transformation $\gamma\to \frac{(1+i)(\delta-\sqrt{1-\delta^2})}{2}\gamma+\frac{(1-i)(\delta+\sqrt{1-\delta^2})}{2}$, we obtain from the equation above that
\begin{equation}\label{riccati  conlim11fua1}
\overline{\gamma}-\gamma=\overline{\gamma}\gamma+A,
\end{equation}
where $A$ has the form in \eqref{riccati  conlim6} with $b_1=(-i\delta+\sqrt{1-\delta^2})$, $b_2=-i$ and $b_3=-\delta+i\sqrt{1-\delta^2}$. Therefore, for the solutions of \eqref{yanagiharaeq12 co} such that \eqref{riccati  conlim11fua1fat} hold, we set
\begin{equation}\label{riccati  conlim11fua2}
\begin{split}
t&=\varepsilon z,\\
f&=\frac{1}{2}\left(\gamma+\frac{1}{\gamma}\right),\\
\gamma&=\frac{(1+i)(\delta-\sqrt{1-\delta^2})}{2}\varepsilon w(t,\varepsilon)+\frac{(1-i)(\delta+\sqrt{1-\delta^2})}{2},
\end{split}
\end{equation}
and replace $A$ with $\varepsilon^2 A$. Then we have the equation in \eqref{riccati  conlim9} and, by letting $\varepsilon\to 0$, we finally obtain the equation in \eqref{riccati  conlim10}. Equation \eqref{yanagiharaeq13 co} is dealt with in a similar way. From the results in \cite{Korhonenzhang2020}, if we put $f=\frac{1-u^2}{u^2}$, $\sqrt{2}u=\frac{1}{2}(\gamma+\gamma^{-1})$, then we also get four different difference Riccati equations which do not have any common solutions. Consider the following case:
    \begin{equation*}
    f=\frac{1-u^2}{u^2}=\frac{8\gamma^2-(\gamma^2+1)^2}{(\gamma^2+1)^2},  \quad \overline{\gamma}=-\frac{-(1+\sqrt{2})\gamma+i}{\gamma-i+i\sqrt{2}}.
    \end{equation*}
By making similar substitutions as in \eqref{riccati  conlim11fua2}, we may obtain the equation in \eqref{riccati  conlim9} and, by letting $\varepsilon\to 0$, we obtain the equation in \eqref{riccati  conlim10}.

\subsection{Relations between \eqref{yanagiharaeq11 co}, \eqref{yanagiharaeq14 co}, \eqref{intro_eq_list2_1}--\eqref{intro_eq_list2_4} and \eqref{intro_eq_list2_6} and the differential equations \eqref{MYS1} or \eqref{MYS2}}
The autonomous versions of the seven difference equations \eqref{yanagiharaeq11 co}, \eqref{yanagiharaeq14 co}, \eqref{intro_eq_list2_1}--\eqref{intro_eq_list2_4} and \eqref{intro_eq_list2_6} are closely related to the QRT map~\cite{QuispelRobertsThompson1988,QuispelRobertsThompson1989}, which is defined by the system of two equations:
\begin{eqnarray}
x_{n+1}&=&\frac{f_1(y_n)-x_nf_2(y_n)}{f_2(y_n)-x_nf_3(y_n)},\label{QRTmapI}\\
y_{n+1}&=&\frac{g_1(x_{n+1})-y_ng_2(x_{n+1})}{g_2(x_{n+1})-y_ng_3(x_{n+1})},\label{QRTmapII}
\end{eqnarray}
where
\begin{eqnarray}
       \left(
         \begin{array}{c}
           f_1(x) \\
           f_2(x) \\
           f_3(x) \\
         \end{array}
       \right)
&=&\left(
     \begin{array}{ccc}
       \alpha_0 & \beta_0 & \gamma_0 \\
       \delta_0 & \varepsilon_0 & \zeta_0 \\
       \kappa_0 & \lambda_0 & \mu_0 \\
     \end{array}
     \right)
     \left(
         \begin{array}{c}
           x^2 \\
           x \\
           1 \\
         \end{array}
       \right)
\times
\left(
     \begin{array}{ccc}
       \alpha_1 & \beta_1 & \gamma_1 \\
       \delta_1 & \varepsilon_1 & \zeta_1 \\
       \kappa_1 & \lambda_1 & \mu_1 \\
     \end{array}
     \right)
     \left(
         \begin{array}{c}
           x^2 \\
           x \\
           1 \\
         \end{array}
       \right),   \label{QRTmapIII}\\
       \left(
         \begin{array}{c}
           g_1(x) \\
           g_2(x) \\
           g_3(x) \\
         \end{array}
       \right)
&=&\left(
     \begin{array}{ccc}
       \alpha_0 & \delta_0 & \kappa_0 \\
       \beta_0 & \varepsilon_0 & \lambda_0 \\
       \gamma_0 & \zeta_0 & \mu_0 \\
     \end{array}
     \right)
     \left(
         \begin{array}{c}
           x^2 \\
           x \\
           1 \\
         \end{array}
       \right)
\times
\left(
     \begin{array}{ccc}
       \alpha_1 & \delta_1 & \kappa_1 \\
       \beta_1 & \varepsilon_1 & \lambda_1 \\
       \gamma_1 & \zeta_1 & \mu_1 \\
     \end{array}
     \right)
     \left(
         \begin{array}{c}
           x^2 \\
           x \\
           1 \\
         \end{array}
       \right),\label{QRTmapVI}
\end{eqnarray}
where '$\times$' denotes the cross product of two vectors. In the symmetric case, i.e.,
\begin{eqnarray}
\left(
     \begin{array}{ccc}
       \alpha_i & \beta_i & \gamma_i \\
       \delta_i & \varepsilon_i & \zeta_i \\
       \kappa_i & \lambda_i & \mu_i \\
     \end{array}
     \right)
=   \left(
     \begin{array}{ccc}
       \alpha_i & \delta_i & \kappa_i \\
       \beta_i & \varepsilon_i & \lambda_i \\
       \gamma_i & \zeta_i & \mu_i \\
     \end{array}
     \right), \quad i=0,1,\label{QRTmapV}
\end{eqnarray}
the QRT family reduces into a single equation
\begin{eqnarray}
w_{n+1}&=&\frac{f_1(w_n)-w_{n-1}f_2(w_n)}{f_2(w_n)-w_{n-1}f_3(w_n)},\label{QRTmapIV}
\end{eqnarray}
by taking $x_n=w_{2n}$ and $y_n=w_{2n+1}$ in \eqref{QRTmapI} and \eqref{QRTmapII}. The symmetric QRT family possesses an invariant:
\begin{equation}\label{MYS2 fu5qret1}
\begin{split}
(\alpha_0&+K\alpha_1)x_{n+1}^2x_n^2+(\beta_0+K\beta_1)(x_{n+1}^2x_n+x_{n+1}x_n^2)+(\gamma_0+K\gamma_1)(x_{n+1}^2+x_n^2)\\
&+(\varepsilon_0+K\varepsilon_1)x_{n+1}x_n+(\zeta_0+K\zeta_1)(x_{n+1}+x_n)+(\mu_0+K\mu_1)=0,
\end{split}
\end{equation}
where $K$ is a constant. By doing a M\"obius transformation $x_n\to \frac{\alpha_1x_n+\beta_1}{\alpha_2x_n+\beta_2}$ with suitable constants $\alpha_i$ and $\beta_j$, the symmetric QRT map in \eqref{MYS2 fu5qret1} takes the form:
\begin{equation}\label{MYS2 fu5qret1f}
\begin{split}
\alpha x_{n+1}^2x_n^2+\beta(x_{n+1}^2+x_n^2)+\gamma x_{n+1}x_n+\delta=0,
\end{split}
\end{equation}
where $\alpha$, $\beta$, $\gamma$ and $\delta$ are constants; see \cite{RamaniCarsteaGrammaticosOhta2002}. By reinterpreting discrete equations as difference equations (see \cite{AblowitzHalburdHerbst2000}), we see later that \eqref{yanagiharaeq14 co} reduces to a special case of the symmetric QRT map in the generic case (i.e., $\alpha\delta\not=0$) and equation \eqref{yanagiharaeq11 co} is the symmetric QRT map in the degenerate case (i.e., $\alpha\delta=0$). Moreover, the five equations \eqref{intro_eq_list2_1}--\eqref{intro_eq_list2_4} and \eqref{intro_eq_list2_6} can also be mapped to the symmetric QRT map, as is shown below.

Suppose that $a$ and $b$ in \eqref{MYS1} and \eqref{MYS2} are both constants. For simplicity, we treat equation \eqref{MYS1} as a special case of equation \eqref{MYS2} with $\tau_1=\tau_3$. By doing a M\"obius transformation $f\to \frac{\alpha_1f+\beta_1}{\alpha_2f+\beta_2}$ with suitable constants $\alpha_i$ and $\beta_j$, if necessary, we may assume that $\tau_1+\tau_3=0$ and $\tau_2+\tau_4=0$. Thus we may write \eqref{MYS2} as
\begin{equation}\label{MYS2 fu1}
\begin{split}
(f')^2 = a(f^2-\tau_1^2)(f^2-\tau_2^2),
\end{split}
\end{equation}
where $\tau_1^2\not=\tau_2^2$ and it is possible that $\tau_1^2=0$. Now the bilinear method applies. Following Ishizaki, we set $f(z)=u(z)/v(z)$ and obtain from equation \eqref{MYS2 fu1} that
\begin{equation}\label{MYS2 fu2}
\begin{split}
[u'(z)v(z)-u(z)v'(z)]^2 = a(z)[u(z)^2-\tau_1^2v(z)^2][u(z)^2-\tau_2^2v(z)^2],
\end{split}
\end{equation}
which is gauge invariant. Corresponding to this equation, we choose a difference equation
\begin{equation}\label{MYS2 fu3}
\begin{split}
&[u(z+1)v(z)-u(z)v(z+1)]^2 \\
= &\, a(z)[u(z)u(z+1)-\tau_1^2v(z)v(z+1)][u(z)u(z+1)-\tau_2^2v(z)v(z+1)],
\end{split}
\end{equation}
having the property of being gauge invariant. Setting $f(z)=u(z)/v(z)$ in the difference equation above, we have
\begin{equation}\label{MYS2 fu4}
\begin{split}
[f(z+1)-f(z)]^2 = a[f(z+1)f(z)-\tau_1^2][f(z+1)f(z)-\tau_2^2].
\end{split}
\end{equation}
which is a special case of the symmetric QRT map \eqref{MYS2 fu5qret1} after expansion.

When $\tau_1^2=0$, we do the transformation $f\to 1/f$ and obtain from \eqref{MYS2 fu4} that
\begin{equation}\label{MYS2 fu4ka1}
\begin{split}
[f(z+1)-f(z)]^2 = -a\tau_2^2\left(f(z+1)f(z)-1/\tau_2^2\right).
\end{split}
\end{equation}
We see that equation \eqref{yanagiharaeq11 co} is included in \eqref{MYS2 fu4ka1}. By setting $t=\varepsilon z$, $f(z)=w(t,\varepsilon)$ and giving $\varepsilon^2(-a\tau_2^2)$ in place of $-a\tau_2^2$, then equation \eqref{MYS2 fu4ka1} has a continuum limit to the equation $(f')^2 = -a(\tau_2^2 f^2-1)$; see~\cite{IshizakiKorhonen2018}. The equation $(f')^2 = -a(\tau_2^2 f^2-1)$ can be obtained from equation \eqref{MYS2 fu1} by doing the transformation $f\to 1/f$.

When $\tau_1^2\not=0$, we re-scale $f$ by $f\to (\tau_1\tau_2)^{1/2}f$ and then expand \eqref{MYS2 fu4} to obtain the canonical form of the symmetric QRT map:
\begin{equation}\label{MYS2 fu5}
\begin{split}
\overline{f}^2f^2+A(\overline{f}^2+f^2)+2B\overline{f}f+1=0,
\end{split}
\end{equation}
where $A=-1/(a\tau_1\tau_2)$ and $B=[2-a(\tau_1^2+\tau_2^2)]/(2a\tau_1\tau_2)$. In particular, for equation \eqref{yanagiharaeq14 co}, we may re-scale $f$ by $f\to f/\kappa_1$ with a constant $\kappa_1$ first to obtain the equation
\begin{equation}\label{MYS2 fu5jh7}
\begin{split}
\overline{f}^2f^2-\kappa_1^2(\overline{f}^2+f^2)+\kappa_2^2=0,
\end{split}
\end{equation}
where $\kappa_2^2=\kappa_1^4\kappa^2$. By doing the transformation $f\to \alpha\frac{f-\beta}{f+\beta}$ with constants $\alpha$, $\beta$ satisfying $\alpha^4=\kappa_2^2$, we obtain from \eqref{MYS2 fu5jh7} the canonical form in \eqref{MYS2 fu5} and the corresponding coefficients $A$ and $B$ in \eqref{MYS2 fu5} are
\begin{equation*}
\begin{split}
A&=\beta^2,\\
B&=2\frac{\alpha^4+2\kappa_1^2\alpha^2+\kappa_2^2}{\alpha^4-2\kappa_1^2\alpha^2+\kappa_2^2}\beta^2,
\end{split}
\end{equation*}
respectively. It is well-known that equation \eqref{MYS2 fu5} is parameterized by elliptic functions; see \cite{Baxter1982,RamaniCarsteaGrammaticosOhta2002} or \cite{HalburdKorhonen2007}. Here we incorporate the process of parametrization from \cite{HalburdKorhonen2007}. Define the parameters $k$ and $\varepsilon$ such that
\begin{equation}\label{MYS2 fu5jh1}
\begin{split}
A&=-\frac{1}{k\text{sn}^2\ \varepsilon},\\
B&=\frac{\text{cn}\ \varepsilon \ \text{dn}\ \varepsilon}{k\text{sn}^2\ \varepsilon},
\end{split}
\end{equation}
respectively. These choices of $A$ and $B$ imply that
\begin{equation}\label{MYS2 fu5jh2}
\begin{split}
k+k^{-1}=(B^2-A^2-1)A^{-1}.
\end{split}
\end{equation}
Therefore, considering equation \eqref{MYS2 fu5} as a quadratic equation for $\overline{f}$, and using the transformation $f=k^{1/2}\text{sn} \ u$, where $\text{sn}\ u$ denotes the Jacobi elliptic $\text{sn}$ function with argument $u$ and modulus $k$, we have
\begin{equation}\label{MYS2 fu5jh3}
\begin{split}
\text{sn}\ \overline{u}=\frac{\text{cn}\ \varepsilon \ \text{dn}\ \varepsilon \ \text{sn}\ u \pm \text{sn}
\ \varepsilon\  \text{cn}\ u \ \text{dn}\ u}{1-k^2\text{sn}^2\ \varepsilon\  \text{sn}^2\ u}.
\end{split}
\end{equation}
This is solved by $u=\varepsilon z+C$, where $C$ is a free parameter. Using the expressions of $A$ and $B$ in \eqref{MYS2 fu5jh1}, we rewrite equation \eqref{MYS2 fu5} as
\begin{equation}\label{MYS2 fu5jh3nh0}
\begin{split}
(\overline{f}-f)^2=(k\text{sn}^2\ \varepsilon)\overline{f}^2f^2+2\left(\text{cn}\ \varepsilon \ \text{dn}\ \varepsilon-1\right)\overline{f}f+k\text{sn}^2\ \varepsilon.
\end{split}
\end{equation}
By the above process, if we set
\begin{equation}\label{MYS2 fu5jh3nh}
\begin{split}
t=\varepsilon z,\quad f=k^{1/2}w(t,\varepsilon),
\end{split}
\end{equation}
then, since $f(z+1)=k^{1/2}w(t+\varepsilon,\varepsilon)$, by dividing $k\text{sn}^2\ \varepsilon$ on both sides of equation \eqref{MYS2 fu5jh3nh0} we get
\begin{equation}\label{MYS2 fu5jh4}
\begin{split}
&\frac{[w(t+\varepsilon,\varepsilon)-w(t,\varepsilon)]^2}{\text{sn}^2\ \varepsilon }\\
=&\, k^2w(t+\varepsilon,\varepsilon)^2w(t,\varepsilon)^2+\left(\frac{2\text{cn}\ \varepsilon\  \text{dn}\ \varepsilon-2}{\text{sn}^2\ \varepsilon}\right)w(t+\varepsilon,\varepsilon)w(t,\varepsilon)+1.
\end{split}
\end{equation}
Recall the Maclaurin series for $\text{sn}\ \varepsilon$, $\text{cn}\ \varepsilon$ and $\text{dn}\ \varepsilon$, respectively:
\begin{equation}\label{MYS2 fu5jh5}
\begin{split}
\text{sn}\ \varepsilon&=\varepsilon-(1+k^2)\frac{\varepsilon^3}{3!}+(1+14k^2+k^4)\frac{\varepsilon^5}{5!}+\cdots,\\
\text{cn}\ \varepsilon&=1-\frac{\varepsilon^2}{2!}+(1+k^4)\frac{\varepsilon^4}{4!}+\cdots,\\
\text{dn}\ \varepsilon&=1-k^2\frac{\varepsilon^2}{2!}+k^2(4+k^2)\frac{\varepsilon^4}{4!}+\cdots.
\end{split}
\end{equation}
By substituting the above series into \eqref{MYS2 fu5jh4} and then letting $\varepsilon\to0$, we obtain the following differential equation:
\begin{equation}\label{MYS2 fu5jh6}
\begin{split}
[w'(t,0)]^2=(k^2 w(t,0)^2-1)(w(t,0)^2-1),
\end{split}
\end{equation}
which is equation \eqref{MYS2 fu1}. In particular, for equation \eqref{MYS2 fu5jh7}, we see that this process yields $2\frac{\alpha^4+2\kappa_1^2\alpha^2+\kappa_2^2}{\alpha^4-2\kappa_1^2\alpha^2+\kappa_2^2}\to -1$ as $\varepsilon \to 0$. Recalling that $\alpha^4=\kappa_2^2$, this implies that $\kappa_2\to \pm\frac{\kappa_1}{3}$ as $\varepsilon \to 0$. By combining the results above together, we conclude that \eqref{yanagiharaeq14 co} has a continuum limit to the differential equation \eqref{MYS2}.

We now consider the five equations \eqref{intro_eq_list2_1}--\eqref{intro_eq_list2_4} and \eqref{intro_eq_list2_6}. We take the equation \eqref{intro_eq_list2_1} as an example to show that this equation can be transformed into the symmetric form, which is included in the QRT family, and have a continuum limit to \eqref{MYS2 fu1}. It is easy to see that solutions of equation \eqref{intro_eq_list2_1} also satisfy the following two equations:
\begin{equation}\label{yanagiharaeq11afujai1}
\begin{split}
\overline{f}^2+\eta^2&=\eta^2f^2,\\
\overline{f}^2-1&= \eta^2(f^2+\eta^2).
\end{split}
\end{equation}
Since all roots of $f(z)\pm i\eta=0$ have even multiplicities, we see that $\frac{f+i\eta}{f-i\eta}=h^2$ for some meromorphic function $h$. It follows that $f=i\eta\frac{h^2+1}{h^2-1}$. Denote $H=\frac{h^2+1}{2h}$. By dividing the first equation in \eqref{yanagiharaeq11afujai1} by the second equation in \eqref{yanagiharaeq11afujai1} on both sides, we obtain
\begin{equation}\label{yanagiharaeq11afujai2}
\begin{split}
\frac{\overline{f}^2+\eta^2}{\overline{f}^2-1}&=\frac{f^2}{f^2+\eta^2}=\left(\frac{h^2+1}{2h}\right)^2=H^2,
\end{split}
\end{equation}
i.e.,
\begin{equation}\label{yanagiharaeq11afujai3}
\begin{split}
\overline{f}^2H^2-(\overline{f}^2+H^2)-\eta^2=0,
\end{split}
\end{equation}
which is a biquadratic equation with respect to $\overline{f}$ and $H$. Instead of considering equation \eqref{yanagiharaeq11afujai3} directly, we may first re-scale $f$ and $H$ simultaneously by $f\to f/\kappa_1$ and $H\to H/\kappa_1$ with a constant $\kappa_1$ to obtain the equation $\overline{f}^2H^2-\kappa_1^2(\overline{f}^2+H^2)-\eta^2\kappa_1^4=0$, where $\kappa_1$ is a constant. Then we do the M\"obius transformations:
\begin{equation}\label{yanagiharaeq11afujai4}
\begin{split}
\overline{f}\to\alpha\frac{\overline{f}-\beta}{\overline{f}+\beta}, \quad H\to\alpha\frac{H-\beta}{H+\beta},
\end{split}
\end{equation}
with suitable constants $\alpha$ and $\beta$, and obtain the canonical form of the symmetric QRT map:
\begin{equation}\label{yanagiharaeq11afujai5}
\begin{split}
\overline{f}^2H^2+A(\overline{f}^2+H^2)+2B\overline{f}H+1=0,
\end{split}
\end{equation}
where $A$ and $B$ are both nonzero constants dependent on $\alpha$, $\beta$ and $\kappa_1$. The process of solving \eqref{MYS2 fu5} shows that equation \eqref{yanagiharaeq11afujai5} is parameterized by elliptic functions and $\overline{f}=H(\overline{\varphi})$ for an entire function $\varphi$. In fact, if we define the parameters $k$ and $\varepsilon$ as in \eqref{MYS2 fu5jh1} and consider equation \eqref{yanagiharaeq11afujai5} as a quadratic equation for $\overline{f}$ with respect to $H$, then using the transformation $H=k^{1/2}\text{sn} \ \varphi $ and $\overline{f}=k^{1/2}\text{sn}\ \overline{\varphi}$, where $\text{sn} \ \varphi$ denotes the Jacobi elliptic $\text{sn}$ function with argument $\varphi$ and modulus $k$, we have
\begin{equation}\label{yanagiharaeq11afujai8}
\begin{split}
\text{sn}\ \overline{\varphi}=\frac{\text{cn}\ \varepsilon \ \text{dn}\ \varepsilon \ \text{sn}\ \varphi \pm \text{sn}
\ \varepsilon\  \text{cn}\ \varphi \ \text{dn}\ \varphi}{1-k^2\text{sn}^2\ \varepsilon\  \text{sn}^2\ \varphi},
\end{split}
\end{equation}
which is solved by $\varphi=\varepsilon \phi+C$ such that $\varphi=\varphi(z)$ is an entire function satisfying $\varphi(z+1)=\varphi(z)+\varepsilon$, where $C$ is a free parameter. It follows that $\phi=\phi(z)$ is an entire function satisfying $\phi(z+1)=\phi(z)+1$. Thus $\phi(z)=\pi(z)+z$, where $\pi(z)$ is an arbitrary non-constant periodic function of period~1. We may suppose that $\pi(z)$ has a zero, say $\pi(z_0)=0$. Then $z_m=z_0+m$ is a zero of $\pi(z)$ for all integers $m\geq0$. It follows that the infinite sequence $\{z_m\}$ satisfies $z_m\to\infty$ as $m\to\infty$ and $\phi_m=\phi(z_m)=z_m$ for all $m$. Therefore, if we set
\begin{equation}\label{yanagiharaeq11afujai9}
\begin{split}
t=\varepsilon_m z_m,\quad H=k^{1/2}w(t,\varepsilon_m), \quad \overline{f}=k^{1/2}w(t+\varepsilon_m,\varepsilon_m),
\end{split}
\end{equation}
then we have from \eqref{yanagiharaeq11afujai5} that
\begin{equation}\label{yanagiharaeq11afujai10}
\begin{split}
&\frac{[w(t+\varepsilon_m,\varepsilon_m)-w(t,\varepsilon_m)]^2}{\text{sn}^2\ \varepsilon_m}\\
=&\, k^2w(t+\varepsilon_m,\varepsilon_m)^2w(t,\varepsilon_m)^2+\left(\frac{2\text{cn}\ \varepsilon_m \  \text{dn}\ \varepsilon_m-2}{\text{sn}^2\ \varepsilon_m}\right)w(t+\varepsilon_m,\varepsilon_m)w(t,\varepsilon_m)+1.
\end{split}
\end{equation}
For a fixed $t$, we choose $\varepsilon_m=\frac{t}{z_m}$. By using the Maclaurin series for $\text{sn}\, \varepsilon_m$, $\text{cn}\, \varepsilon_m$ and $\text{dn}\, \varepsilon_m$, respectively, in \eqref{MYS2 fu5jh5}, and then letting $\varepsilon_m\to0$, we obtain exactly the differential equation in \eqref{MYS2 fu5jh6}.
For each of the four equations \eqref{intro_eq_list2_2}, \eqref{intro_eq_list2_3} and \eqref{intro_eq_list2_4} and \eqref{intro_eq_list2_6}, by using the same method as above we may obtain an equation of the form in \eqref{yanagiharaeq11afujai3} with a certain meromorphic function $H$ and then also obtain the differential equation in \eqref{MYS2 fu5jh6} after taking a continuum limit. We omit those details.

\section{Concluding remarks}\label{Concluding remarks}

The Malmquist type difference equations \eqref{first_order_de_n} with $\deg_f(R(z,f))=n$ are revisited in this paper.
In section~\ref{An elementary derivation of the first 12 equations}, we first complete the classification for equation \eqref{first_order_de_n} with $\deg_f(R(z,f))=n$ by identifying one new equation \eqref{intro_eq_list2_6} left out in our previous work. We have actually derived the eleven equations \eqref{yanagiharaeq11 co}--\eqref{intro_eq_list2_6} using some recent observations on equation \eqref{first_order_de_n} in \cite{zhangkorhonen2022}. In section~\ref{Relations between Malmquist type differential and difference equations}, we study the relations between the Malmquist type differential and difference equations in the case $n=2$. The seven equations \eqref{lineareq}--\eqref{yanagiharaeq15 co} singled out from \eqref{first_order_de_n} have finite order meromorphic solutions and appear to be integrable from the viewpoint of the proposed difference analogue of the Painlev\'{e} property suggested by Ablowitz, Halburd and Herbst \cite{AblowitzHalburdHerbst2000}. We point out that each of the equations \eqref{lineareq}--\eqref{yanagiharaeq14 co} has a natural continuum limit to equations \eqref{riccati}, \eqref{MYS1} or \eqref{MYS2}. The process of taking a continuum limit from equation \eqref{yanagiharaeq14 co} to equation \eqref{MYS2} also applies to some more equations singled out from \eqref{first_order_de_n} in the case $n=2$, namely the five equations \eqref{intro_eq_list2_1}--\eqref{intro_eq_list2_4} and \eqref{intro_eq_list2_6}. These equations only have infinite order transcendental meromorphic solutions. However, they can also be mapped to the symmetric QRT map with respect to $\overline{f}$ and a meromorphic function $H$ dependent on $f$, so that $\overline{f}$ and $H$ are written in the form $\overline{f}=H(\varphi+\varepsilon)$ and $H=H(\varphi)$ with an argument $\varphi$ which is an entire function of $z$. In fact, by looking at the proof of the main theorems in \cite{zhangkorhonen2022} and the discussions in the last section of \cite{zhangkorhonen2022}, we see that most equations singled out from \eqref{first_order_de_n} with $n=2$ in the autonomous case in \cite{zhangkorhonen2022} can be written in the form
\begin{equation*}
\overline{f}^2+R_1^2=1,
\end{equation*}
or the form
\begin{equation*}\overline{f}^2R_2^2-(\overline{f}^2+R_2)+\kappa^2=0,
\end{equation*}
where $R_1$ and $R_2$ are rational functions in $f$ or in a certain meromorphic function $g$ such that $f=g^2-1$ or $f=a\frac{g^2-b}{g^2-c}$ for some constants $a,b,c$ such that $abc\not=0$, and thus are included in the QRT family defined in \eqref{QRTmapI}--\eqref{QRTmapIV}.

Recall from \cite{RamaniCarsteaGrammaticosOhta2002} that the QRT family defined in \eqref{QRTmapI}--\eqref{QRTmapIV} possesses an invariant which is biquadratic in $x_n$ and $y_n$:
\begin{equation}\label{MYS2 fu5qret}
\begin{split}
(\alpha_0&+K\alpha_1)x_n^2y_n^2+(\beta_0+K\beta_1)x_n^2y_n+(\gamma_0+K\gamma_1)x_n^2+(\delta_0+K\delta_1)x_ny_n^2\\
&+(\varepsilon_0+K\varepsilon_1)x_ny_n+(\zeta_0+K\zeta_1)x_n+(\kappa_0+K\kappa_1)y_n^2\\
&+(\lambda_0+K\lambda_1)y_n+(\mu_0+K\mu_1)=0,
\end{split}
\end{equation}
where $K$ plays the role of the integration constant. In the symmetric case, the invariant in \eqref{MYS2 fu5qret} becomes just \eqref{MYS2 fu5qret1}. In the generic case, it is shown in \cite{RamaniCarsteaGrammaticosOhta2002} that, by doing two M\"obius transformations $x_n\to\frac{\alpha_1 x_n+\alpha_2}{\alpha_3x_n+\alpha_4}$ and $y_n\to\frac{\beta_1 y_n+\beta_2}{\beta_3y_n+\beta_4}$ with suitable constants $\alpha_i$ and $\beta_j$, respectively, \eqref{MYS2 fu5qret} can also be mapped into the symmetric form:
\begin{equation}\label{MYS2 fu5qretgf}
\begin{split}
x_n^2y_n^2+A(x_n^2+y_n^2)+2Bx_ny_n+1=0,
\end{split}
\end{equation}
where $A$ and $B$ are constants. The process of solving \eqref{yanagiharaeq11afujai5} in section~\ref{Relations between Malmquist type differential and difference equations} shows that $x_n$ and $y_n$ in \eqref{MYS2 fu5qretgf} are parameterized by elliptic functions and $x_n=y_n(\overline{\varphi})$ with some entire function $\varphi$. Combining this fact and the process of taking continuum limit from equations \eqref{MYS2 fu5} and \eqref{yanagiharaeq11afujai5} to the differential equation \eqref{MYS2} in section~\ref{Relations between Malmquist type differential and difference equations}, we conclude that the QRT family defined in \eqref{QRTmapI} and \eqref{QRTmapII} always has a continuum limit to the first order differential equation \eqref{MYS2} in the generic case.


\begin{thebibliography}{99}
%

\bibitem{AblowitzHalburdHerbst2000}
Ablowitz M.J., Halburd R., Herbst B.: On the extension of the {P}ainlev\'e property to difference equations. Nonlinearity \textbf{13}(3), 889--905 (2000)


\bibitem{bank1980growth}
Bank S.B., Kaufman R.P: On the growth of meromorphic solutions of the differential equation {$(y^{\prime} )^{m}=R(z,\,y)$}. Acta Math. \textbf{144} (3-4), 223--248 (1980)

\bibitem{Baxter1982}
Baxter R.J.: Exactly solved models in statistical mechanics. Academic Press, Inc. [Harcourt Brace Jovanovich, Publishers], London (1982)




\bibitem{halburdrk:LMS2006}
Halburd R.G., Korhonen R.J.: Finite-order meromorphic solutions and the discrete {P}ainlev\'e equations. Proc. Lond. Math. Soc. (3) \textbf{94}(2), 443--474 (2007)

\bibitem{HalburdKorhonen2007}
Halburd R.G., Korhonen R.J.: Meromorphic solutions of difference equations, integrability and the discrete {P}ainlev\'e equations. J. Phys. A \textbf{40}(6), R1--R38 (2007)

\bibitem{halburdkt:14TAMS}
Halburd R.G., Korhonen R.J., Tohge K.: Holomorphic curves with shift-invariant hyperplane preimages. Trans. Amer. Math. Soc. \textbf{366}(8), 4267--4298 (2014)

\bibitem{Hayman1964Meromorphic}
Hayman W.K.: Meromorphic functions. Oxford Mathematical Monographs, Clarendon Press, Oxford (1964)

\bibitem{HirotaRI}
Hirota, R.: Nonlinear partial difference equations. I. A difference analogue of the Korteweg-de Vries equation. J. Phys. Soc. Japan \textbf{43}(4), 1424--1433 (1977)

\bibitem{HirotaRII}
Hirota, R.: Nonlinear partial difference equations. II. Discrete-time Toda equation. J. Phys. Soc. Japan \textbf{43}(6), 2074--2078 (1977)

\bibitem{HirotaRIII}
Hirota, R.: Nonlinear partial difference equations. III. Discrete sine-Gordon equation. J. Phys. Soc. Japan \textbf{43}(6), 2079--2086 (1977)

\bibitem{Hietarinta1996}
Hietarinta, J.: Introduction to the Hirota bilinear method. (English summary) Integrability of nonlinear systems (Pondicherry, 1996), 95--103, Lecture Notes in Phys., 495, Springer, Berlin, 1997.

\bibitem{Ishizaki2017}
Ishizaki K.: Meromorphic solutions of difference Riccati equations. Complex Var. Elliptic Equ. \textbf{62}(1), 10--122 (2017)

\bibitem{IshizakiKorhonen2018}
Ishizaki K. and Korhonen R.: Meromorphic solutions of algebraic difference equations. Constr. Approx. \textbf{48}, 371--384 (2018)


\bibitem{Korhonenkazhzh2020}
Korhonen R., Tohge K., Zhang Y.Y., Zheng J. H.: A Lemma on the difference quotients. Ann. Acad. Sci. Fenn. Ser. Math., \textbf{45}(1) 479--491 (2020)

\bibitem{Korhonenzhang2020}
Korhonen R., Zhang Y.Y.: Existence of meromorphic solutions of first-order difference equations. Constr. Approx. \textbf{51}(3), 465--504 (2020)

\bibitem{laine:71}
Laine I.: On the behaviour of the solutions of some first order differential equations. Ann. Acad. Sci. Fenn. Ser. A I, (497) 26 (1971)

\bibitem{Laine1993}
Laine I.: Nevanlinna theory and complex differential equations. De Gruyter Studies in Mathematics, vol.~15, Walter de Gruyter \& Co., Berlin (1993)

\bibitem{malmquist1913fonctions}
Malmquist J.: Sur les fonctions a un nombre fini de branches d\'efinies par les \'equations diff\'erentielles du premier ordre. Acta Math. \textbf{36}(1), 297--343 (1913)


\bibitem{NakamuraYanagihara1989difference}
Nakamura Y., Yanagihara N.: Meromorphic solutions of some polynomial difference equations. Analytic function theory of one complex variable, Pitman Res. Notes Math. Ser., vol. 212, Longman Sci. Tech., Harlow, 178--198 (1989)

\bibitem{QuispelRobertsThompson1988}
Quispel G.R.W., Roberts J.A.G., Thompson C.J.: Integrable mappings and soliton equations. Phys. Lett. A \textbf{126}(7), 419--421 (1988)

\bibitem{QuispelRobertsThompson1989}
Quispel G.R.W., Roberts J.A.G., Thompson C.J.: Integrable mappings and soliton equations. {II}. Phys. D \textbf{34} (1-2), 183--192 (1989)

\bibitem{RamaniCarsteaGrammaticosOhta2002}
Ramani A., Carstea A.S., Grammaticos B., Ohta Y.: On the autonomous limit of discrete {P}ainlev\'e equations. Phys. A \textbf{305}(3-4), 437--444 (2002)




\bibitem{shimomura:81}
Shimomura S.: Entire solutions of a polynomial difference equation. J. Fac. Sci. Univ. Tokyo Sect. IA Math. \textbf{28}(2), 253--266 (1981)

\bibitem{steinmetz:78}
Steinmetz N.: Eigenschaften eindeutiger {L}{\"o}sungen gew{\"o}hnlicher {D}ifferentialgleichungen im {K}omplexen. Dissertation, Karlsruhe Univ., Karlsruhe (1978)



\bibitem{yamanoi:04}
Yamanoi K.: The second main theorem for small functions and related problems. Acta Math. \textbf{192}(2), 225--294 (2004)

\bibitem{yamanoi:05}
Yamanoi K.: Defect relation for rational function as targets. Forum Math. \textbf{17}(2), 169--189 (2005)

\bibitem{yanagihara:80}
Yanagihara N.: Meromorphic solutions of some difference equations. Funkcial. Ekvac. \textbf{23}(3), 309--326 (1980)


\bibitem{Yanagihara1989difference}
Yanagihara N.: Polynomial difference equations which have meromorphic solutions of finite order. Analytic function theory of one complex variable, Pitman Res. Notes Math. Ser., vol. 212, Longman Sci. Tech., Harlow, pp.~368--392 (1989)

\bibitem{yosida:33}
Yosida K.: A generalization of {M}almquist's theorem. J. Math. \textbf{9}, 253--256 (1933)

\bibitem{zhangkorhonen2022}
Zhang Y.Y, Korhonen~R.: A Malmquist--Steinmetz theorem for difference equations. Constr. Approx. (2023), https://doi.org/10.1007/s00365-023-09648-y

\bibitem{zhengR:18}
Zheng J.H., Korhonen~R.: Studies of differences from the point of view of Nevanlinna theory. Trans. Amer. Math. Soc. \textbf{373}(6), 4285--4318 (2020)

%
\end{thebibliography}
\end{document}